\documentclass[11pt,a4paper]{amsart}
\usepackage[margin=1in]{geometry}
\usepackage{amsmath,amssymb,amsthm,mathtools}
\usepackage{microtype}
\usepackage{enumitem}
\usepackage[hidelinks]{hyperref}
\usepackage{needspace}
\usepackage[T1]{fontenc}
\usepackage{lmodern}
\allowdisplaybreaks[2]
\newcommand{\R}{\mathbb{R}}
\newcommand{\Gal}{\operatorname{Gal}}
\newcommand{\Tr}{\operatorname{Tr}}
\newcommand{\diag}{\operatorname{diag}}
\newcommand{\GL}{\operatorname{GL}}

\theoremstyle{plain}
\newtheorem{theorem}{Theorem}[section]
\newtheorem{proposition}[theorem]{Proposition}
\newtheorem{lemma}[theorem]{Lemma}
\newtheorem{corollary}[theorem]{Corollary}
\theoremstyle{definition}

\numberwithin{equation}{section}
\newcommand{\Q}{\mathbb{Q}}
\newcommand{\C}{\mathbb{C}}
\newcommand{\Z}{\mathbb{Z}}
\newcommand{\Pone}{\mathbb{P}^1}
\newcommand{\cE}{\mathcal{E}}
\title[Algebraically primitive Teichm\"uller curves in $\Omega\mathcal{M}_g(g-1,g-1)^{\mathrm{hyp}}$]{Complete classification of algebraically primitive Teichm\"uller curves in $\Omega\mathcal{M}_g(g-1,g-1)^{\mathrm{hyp}}$}
\author{Myeongjae Lee}
\address{Department of Mathematics\\
Indiana University\\
831 E.\ Third Street\\
Bloomington, IN 47405\\
USA}
\email{myelee@iu.edu}

\keywords{Teichm\"uller curves, Moduli of differentials, Translation surfaces}
\date{}
\begin{document}
\begin{abstract}
We give a complete classification of algebraically primitive Teichm\"uller curves in $\Omega\mathcal M_g(g-1,g-1)^{\mathrm{hyp}}$ for $g>2$. These curves are precisely those generated by the Veech $2p$-gons, where $p=2g+1$ is prime.
\end{abstract}
\maketitle

\tableofcontents

\section{Introduction} \label{sec:intro}
A Teichm\"uller curve \(C\) is an algebraic curve in \(\mathcal{M}_g\) that is
isometrically immersed for the Teichm\"uller metric. We consider curves
generated by abelian differentials, namely projections of closed
\(\mathrm{GL}^+(2,\mathbb{R})\)-orbits of translation surfaces
\((X,\omega)\). A curve is said to lie in a stratum component when its
generating differential lies there. The trace field \(K\) of \(C\) is defined as
\[
    K\coloneqq \mathbb{Q}(\{\operatorname{tr}(A):A\in \operatorname{SL}(X,\omega)\})\subset \mathbb{R}
\]
where \(\operatorname{SL}(X,\omega)\) is the Veech group of the generating
translation surface \((X,\omega)\). The trace field \(K\) has degree at most
\(g\) over \(\mathbb{Q}\). The Teichm\"uller curve \(C\) is called
\emph{algebraically primitive} if \([K:\mathbb{Q}]=g\).
In $g=2$, the algebraically primitive curves are completely classified:
the irreducible components of the Weierstrass curves \(W_D\subset\mathcal{M}_2\) for nonsquare discriminants $D$
\cite{McM03,Cal04,McM05}, and a unique curve in \(\mathcal{H}(1,1)\),
generated by the regular decagon \cite{McM06}. For $g>2$, Eskin--Filip--Wright \cite{EFW} proved that there are only
finitely many Teichm\"uller curves in each genus whose trace field has
degree greater than two. Earlier finiteness results in the present
setting and related strata include the following. M\"oller \cite{Mol08} proved finiteness of algebraically primitive curves in the hyperelliptic component
\(\Omega\mathcal{M}_g(g-1,g-1)^{\mathrm{hyp}}\). Bainbridge--M\"oller \cite{BM} obtained finiteness in \(\Omega\mathcal{M}_3(3,1)\), and
Bainbridge--Habegger--M\"oller \cite{BHM} established finiteness in any stratum of genus three. In the minimal stratum in prime genus,
Matheus--Wright \cite{MW} proved finiteness in \(\Omega\mathcal{M}_g(2g-2)\) for prime \(g> 2\), and Nguyen--Wright \cite{NW} together with Matheus--Wright \cite{MW} obtained finiteness of non-arithmetic Teichm\"uller curves in \(\Omega\mathcal{M}_3(4)^{\mathrm{hyp}}\). Winsor \cite{Winsor} gave a complete classification in \(\Omega\mathcal{M}_3(2,2)^{\mathrm{hyp}}\), where the Veech \(14\)-gon generates the unique algebraically primitive Teichm\"uller curve.
In this paper, we use the description of the cusps by M\"oller \cite{Mol08}, together with the Harder--Narasimhan filtration of the Hodge bundle given by Yu--Zuo \cite{YZ}, as it is applied by Bainbridge--Habegger--M\"oller in \cite{BHM}. We also use the fact that cylinder circumferences and heights form trace-dual bases, stated in \cite{BM,Winsor}. As a result, we have the complete classification in \(\Omega\mathcal{M}_g(g-1,g-1)^{\mathrm{hyp}}\) for any $g\ge 3$, generalizing \cite{McM06} and \cite{Winsor}.

\begin{theorem}\label{thm:classification}
The component
$\Omega\mathcal M_g(g-1,g-1)^{\mathrm{hyp}}$ for $g\ge 3$ contains an algebraically primitive Teichm\"uller curve if and only if $p=2g+1$ is prime.
In that case, the unique such curve is generated by the Veech $2p$-gon.
\end{theorem}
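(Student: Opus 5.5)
The proof splits into an existence half and a rigidity half.

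\emph{Existence.} Veech showed that the regular $2n$-gon with opposite sides glued is a lattice surface. For $n=2g+1$ it has genus $g$ and two cone points of angle $2\pi g$. The hyperelliptic involution is rotation by $\pi$, and it swaps the two singularities, so the surface lies in $\Omega\mathcal M_g(g-1,g-1)^{\mathrm{hyp}}$. Its trace field is $\Q(\cos(\pi/p))=\Q(\cos(2\pi/p))$ with $p=2g+1$. This field has degree $\varphi(2p)/2=\varphi(p)/2$, and that equals $g$ exactly when $p$ is prime. So if $p$ is prime we obtain an algebraically primitive curve. When $p$ is not prime we still need to show that no curve exists at all, which is the content of the rigidity half.

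\emph{Rigidity.} Let $C$ be algebraically primitive with generator $(X,\omega)$ and trace field $K$ of degree $g$.

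First, I would apply M\"oller's cusp description \cite{Mol08}. By Veech dichotomy every periodic direction gives a cylinder decomposition. In the hyperelliptic component each cylinder is fixed by the involution, and the stable curve at the cusp is a chain of $\Pone$'s, with two of them meeting the zeros. Algebraic primitivity forces exactly $g$ cylinders in each direction. Their circumferences $c_i$ and heights $h_i$ lie in $K$ (after normalization), and their moduli are commensurable.

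Second, I would use that circumferences and heights form trace-dual bases \cite{BM,Winsor}, so that $\Tr_{K/\Q}(c_i h_j)=\delta_{ij}$ up to scaling.

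Third, I would bring in the Harder--Narasimhan filtration of the Hodge bundle from Yu--Zuo \cite{YZ}, used as in \cite{BHM}. For the hyperelliptic stratum with two zeros of order $g-1$ this determines the Lyapunov spectrum and the degrees of the eigen-line bundles $\mathcal L_i$, namely $\lambda_i=(2i-1)/(2g-1)$ or a similar explicit list. For an algebraically primitive curve each Galois conjugate $\omega^{\sigma}$ spans one of the $\mathcal L_i$. Its vanishing orders at the cusps are then forced, so $\omega^{\sigma}$ vanishes to prescribed orders at the nodes of the stable curve. On the chain of $\Pone$'s, the $\omega^{\sigma}$ are explicit rational differentials whose residues are the conjugated circumferences $\sigma(c_i)$. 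The vanishing conditions therefore become polynomial identities among the $\sigma(c_i)$.

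Solving these identities should show that the ratios $c_{i+1}/c_i$ are Galois conjugates $\sigma(x)$ of a single algebraic number, namely values of Chebyshev-type polynomials in $2\cos(k\pi/p)$. This identifies the cylinder data with that of the $2p$-gon. In particular $K$ must equal $\Q(\cos(\pi/(2g+1)))$, whose degree is $g$ only when $2g+1$ is prime. That gives the ``only if'' direction.

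Uniqueness then follows. A lattice surface is determined by one cylinder decomposition together with its twists, and the twists are pinned down by the parabolic element, whose moduli are all equal. So the surface is the Veech $2p$-gon.

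The main obstacle is the third step: turning the filtration and vanishing-order data into an explicit system on the $\sigma(c_i)$ and showing that its only solution with real, positive, Galois-conjugate entries is the $2p$-gon data. I would attack this by writing the conjugate differentials on each component of the degenerate chain in the form $\prod(z-a_j)^{m_j}dz$ and matching residues. The hard part is controlling all $g$ conjugates simultaneously rather than one cusp at a time. For that I would use the trace-duality relation to show the resulting Gram matrix is the one for $\cos(k\pi/p)$.
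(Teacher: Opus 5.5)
Your existence half is fine, and you name the right tools. The rigidity half, however, has a genuine gap: its decisive step (``solving these identities should show\ldots'') is essentially the whole theorem, and you give no mechanism for it.

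\textbf{The setup is misdescribed.} Not every periodic direction has $g$ cylinders: a direction whose saddle connections are all loops degenerates to two rational components joined by $g+1$ nodes. In a direction containing a saddle connection between the two zeros, the stable curve is a single $\Pone$ with $g$ pairs of points $x_i^{\pm1}$ identified, not a chain of $\Pone$'s. The conjugate forms have simple poles with residues $\sigma(c_i)$ at the nodes and prescribed vanishing orders at the marked zeros $0,\infty$, not at the nodes. The normalized slopes are $1-(j-1)/g$, not the minimal-stratum list.

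\textbf{The key arithmetic input is missing.} You never use M\"oller's theorem that the $x_i$ are roots of unity, so that $K\subset L=\Q(\zeta_N)$. Without it, nothing in your system can single out $2g+1$.

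\textbf{The mechanism the paper actually uses.} The filtration gives $\sigma_j(c_i)=c_iQ_j(x_i+x_i^{-1})$ with $\deg Q_j=j-1$. Trace duality, written as $SHS^{\mathsf T}=D$, makes the columns of $S=(\sigma_j(c_i))$ orthogonal for the weights $D^{-1}$. Combined with $\omega^{\sigma_j}(0)=0$ for $j<g$, this forces $\sigma_g(c_i)=BD_i(x_i-x_i^{-1})$. Together with $\deg Q_2=1$, that identity has two consequences:
\begin{enumerate}
\item every element of $\Gal(L/K)$ acts on all $x_i$ by one common rule $x\mapsto\pm x^{\pm1}$, so $\varphi(N)\in\{2g,4g\}$;
\item the vectors $\bigl(D_i(x_i^u-x_i^{-u})\bigr)_i$ are orthogonal whenever $\tau_{uv^{-1}}\notin\Gal(L/K)$.
\end{enumerate}
The second makes $F(a)=\sum_iD_i\cos(2\pi n_ia/N)$ essentially constant for $a\ne0$. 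Fourier inversion then puts positive node weight on every nonzero inversion class modulo $N$ (or $N/2$). Counting classes, e.g.\ $(N-1)/2\le g=\varphi(N)/2$ for odd $N$, forces $N\in\{p,2p\}$ with $p=2g+1$ prime. This requires also excluding $N=2^k$ and fixing signs through the moment equations with $c_i=B_0D_i(x_i^s-x_i^{-s})$.

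Your intended intermediate claim, that the ratios $c_{i+1}/c_i$ are conjugates of one Chebyshev-type number, is unsupported. It is also not the quantity that needs pinning down; the node positions $x_i$ are.

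\textbf{The uniqueness step also fails as stated.} Suppose all moduli $h_i/c_i$ are equal. Then the horizontal parabolic lies in the Veech group for \emph{every} choice of twists, and for every order in which cylinders of the given circumferences are stacked along the chain, because it shifts each $t_i$ by $c_i\equiv0$. So it constrains neither the twists nor the order.

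The paper needs a further argument here:
\begin{enumerate}
\item Shear so that $t_1=0$; the self-glued interval of $C_1$ then suspends to a vertical cylinder.
\item Veech dichotomy makes the vertical direction periodic, with $g+1$ cylinders of commensurable moduli.
\item The intersection numbers of these cylinders with the $C_i$ form a rational symmetric nonnegative matrix $A$ with $Ac_\pi=(c_{\pi(1)}/2)\,c_\pi$.
\item Since the $c_i$ form a $\Q$-basis of $K$, $A$ must be the matrix of multiplication by $c_{\pi(1)}/2$.
\item Positivity of $\operatorname{tr}A$ forces $\pi(1)=1$, and the resulting tridiagonal form of $A$ forces $\pi=\operatorname{id}$ and $t_1=\cdots=t_g=0$.
\end{enumerate}
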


\subsection{Strategy of proof}

\begin{itemize}
    \item In Section~\ref{sec:intro}, we describe the normalized irreducible cusps of the algebraically primitive curves.
    \item In Section~\ref{sec:HN-duality}, we use the Harder--Narasimhan filtration and trace duality to obtain the quotient polynomials and sine orthogonality.
    \item In Section~\ref{sec:degree}, we prove that $[L:\Q]=\varphi(N)=2g$ or $4g$ and compute $\Gal(L/K)$.
    \item In Section~\ref{sec:reduction}, we prove the necessity in Theorem~\ref{thm:classification}: $p=2g+1$ is prime. Moreover, we have $N=p$ or $2p$, and the node preimages are all primitive $N$-th roots.
    \item In Section~\ref{sec:uniqueness}, we prove uniqueness of the Veech $2p$-gon, the second statement of Theorem~\ref{thm:classification}. 
\end{itemize}

\paragraph{\bf Acknowledgement}
The author would like to thank Matt Bainbridge for many valuable discussions and for introducing Yu--Zuo's result \cite{YZ} on the Harder--Narasimhan filtration of the Hodge bundle to the author. 

\paragraph{\bf AI Usage Disclosure}
The author acknowledges the use of ChatGPT-5.6 to assist with proving Theorem~\ref{thm:degree} in an earlier version, which later helped the author realize that we have a much stronger Lemma~\ref{lem:top-duality} which can lead to the complete classification result. All core mathematical reasoning and proof construction in the current version were performed and validated by the author.

\subsection{Cusps of Teichm\"uller curves}\label{sec:cusps}
Let \(C\subset \Omega\mathcal{M}_g(g-1,g-1)^{\mathrm{hyp}}\) be an
algebraically primitive Teichm\"uller curve, with totally real trace field
\(K\) and \([K:\Q]=g\). Throughout this paper we assume \(g\ge 3\).
By M\"oller \cite{Mol08}, \(K\) is a subfield of a cyclotomic field, hence abelian and in particular Galois over \(\Q\).
An irreducible periodic direction exists: choose a saddle connection
joining the two zeros and apply Veech's periodicity theorem.  With
only two zeros, its saddle-connection graph is connected; algebraic
primitivity then gives $g$ cylinders and rational normalization
\cite[\S2]{Winsor}.
M\"oller's cusp description \cite{Mol08}, together with the trace-dual
height result \cite[Lemma~10.4]{BM}, \cite[Theorem~2.4]{Winsor}, gives the following.

\begin{proposition}\label{prop:MM} 
Consider an irreducible horizontal cusp whose
normalization is a meromorphic \(1\)-form \(\omega_h^{\mathrm{norm}}\)
on \(\Pone_z\), written in two ways:
\[
\omega_h^{\mathrm{norm}}
=\sum_{i=1}^g c_i\Bigl(\frac{1}{z-x_i}-\frac{1}{z-x_i^{-1}}\Bigr)\,dz
=R\frac{z^{g-1}}{\prod_{i=1}^g (z-x_i)(z-x_i^{-1})}\,dz,\]
where the $2g$ node preimages $x_i^{\pm1}$ are distinct and different
from $\pm1$, the normalized residues $c_i$ are positive real, $R\in i\R^\times$,
and the marked zeros of
\(\omega_h^{\mathrm{norm}}\) are at \(z=0,\infty\). Then
\begin{itemize}
    \item The \(x_i\) are roots of unity;
    \item \(K\subset L\coloneqq \mathbb{Q}(x_1,\dots,x_g)\);
    \item A real multiple of \(\{c_i\}\) forms a \(\Q\)-basis of \(K\).
    \item Let $(h_i)$ be the
    trace-dual basis of $(c_i)$.  Then
    \[
       \frac{h_i c_k}{c_i h_k}\in\Q_{>0}\qquad1\le i,k\le g.
    \]
    For a generating differential $(X,\omega)$ near the cusp, the cylinder circumferences and heights are common positive real multiples of $(c_i)$ and $(h_i)$, respectively.
\end{itemize}
\end{proposition}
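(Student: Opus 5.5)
This proposition is essentially an assembly of results already in the literature, so the plan is to show that the normalized cusp form has the stated shape and then import each bullet from the cited sources. First, fix the shape of the cusp. In the horizontal direction the irreducible cusp has $g$ cylinders, because of algebraic primitivity and the connectedness of the saddle-connection graph recalled just before the statement. Pinching their core curves gives a stable curve whose normalization is rational. The hyperelliptic involution exchanges the two zeros, preserves each cylinder, and reverses its core curve, so it descends to an involution of $\Pone_z$. I will choose the coordinate so that this involution is $z\mapsto z^{-1}$, the two zeros go to $0$ and $\infty$, and the fixed points are $\pm1$. The $g$ nodes then have preimages forming $g$ pairs $x_i,x_i^{-1}$, with residues $\pm c_i$ at the two points of each pair. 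This gives the partial-fraction expression.

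Next, derive the product form and the conditions on the constants. Combining the partial fractions over the common denominator gives a numerator of degree at most $2g-2$. The form must vanish to order $g-1$ at both $0$ and $\infty$, which forces the numerator to be $R z^{g-1}$. The involution is anti-invariant on the form, which is consistent with this. The residues $c_i$ are proportional to the cylinder circumferences, so they are real and, with suitable orientation, positive. The periods around nodes are $2\pi i\,c_i$, and normalizing these periods to be real fixes $R\in i\R^\times$. The node preimages avoid $\pm1$ because a node there would sit at a fixed point of the involution, and that fails distinctness of the pair.

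The arithmetic bullets come from M\"oller's cusp analysis \cite{Mol08} for this component. The degeneration of the Hodge bundle over the cusp shows that the $\omega$-eigenform and its Galois conjugates specialize to forms of the same shape. The vanishing conditions at $0$ and $\infty$, together with the torsion condition (the difference of the two zeros is torsion on the Jacobian), force the $x_i$ to be roots of unity. The field generated by ratios of residues is contained in $L$, and it equals $K$, since the cylinder moduli are commensurable and the circumferences span $K$ up to scaling. Algebraic primitivity makes the $g$ values $c_i$ linearly independent, so they form a basis after rescaling. For the last bullet I will cite the trace-duality theorem \cite[Lemma~10.4]{BM}, \cite[Theorem~2.4]{Winsor}. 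It says that the heights form, up to a common scalar, the $\Tr_{K/\Q}$-dual basis of the circumferences. Commensurability of the moduli $h_i/c_i$ then gives $\frac{h_ic_k}{c_ih_k}\in\Q_{>0}$.

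The main obstacle is the roots-of-unity claim, which is the real content of \cite{Mol08}. Here I would simply quote M\"oller rather than reprove it. The delicate point in my own write-up is matching M\"oller's coordinate normalization to this one, namely the involution $z\mapsto1/z$ with the zeros at $0$ and $\infty$.
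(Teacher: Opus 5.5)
Your proposal is correct and takes the same route as the paper, which also gives no independent argument: it assembles the statement from M\"oller's cusp description \cite{Mol08} for the roots-of-unity and $K\subset L$ bullets, the trace-duality results \cite[Lemma~10.4]{BM}, \cite[Theorem~2.4]{Winsor} for the heights, and commensurability of cylinder moduli (implicit in the paper) for the ratio condition. One small slip: with $c_i>0$ the node periods $2\pi i c_i$ are imaginary, so $R\in i\R^\times$ does not come from making those periods real; instead $R=c_i(x_i-x_i^{-1})P'(x_i+x_i^{-1})$, and this is purely imaginary once $|x_i|=1$.
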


Note that \(L=\mathbb{Q}(\zeta_N)\) where
\(N\coloneqq \operatorname{lcm}_i(\operatorname{ord}x_i)\). We may assume \(c_i\in K\)
and \(K=\mathbb{Q}(c_1,\dots,c_g)\) by a common real scaling of
\(\omega_h^{\mathrm{norm}}\) if necessary. Write
\[
 x_i=\zeta_N^{n_i},\qquad \gcd(n_1,\ldots,n_g,N)=1.
\]
The gcd assertion follows from the definition of $N$: otherwise all node orders
would divide a proper divisor of $N$.
We identify $\Gal(L/\Q)$ with $(\Z/N\Z)^\times$ by
\[
 \tau_u(\zeta_N)=\zeta_N^u.
\]
Every $\tau_u$ preserves $K$, since $L/\Q$ is abelian. We reserve $\tau_u$ for
these power automorphisms. The notation introduced above will be used throughout this paper. Furthermore, we set
\[
 y_i=x_i+x_i^{-1},\qquad d_i=x_i-x_i^{-1},\qquad
 P(Y)=\prod_{i=1}^g(Y-y_i).
\]
The $y_i$ are pairwise distinct, and $d_i\ne0$ for every $i$.

Then we have the relation 
\[
    \omega_h^{\mathrm{norm}}=\frac{R dz}{zP(z+z^{-1})}
\] and
\begin{equation*}
    c_i=\operatorname{res}_{x_i}\omega_h^{\mathrm{norm}} = \frac{R}{d_iP'(y_i)}.
\end{equation*}
The zero of order $g-1$ at $0$ also gives
\begin{equation}\label{eq:moments}
 \sum_{i=1}^g c_i(x_i^r-x_i^{-r})=0,\qquad 1\le r\le g-1.
\end{equation}
Indeed, expanding the partial fractions at $0$ gives
\[
 \frac{\omega_h^{\mathrm{norm}}}{dz}
 =\sum_{r\ge1}\left(\sum_{i=1}^g c_i(x_i^r-x_i^{-r})\right)z^{r-1},
\]
and its coefficients of $z^0,\ldots,z^{g-2}$ vanish.

\section{Consequences of Harder--Narasimhan filtration and Trace duality}\label{sec:HN-duality}

\subsection{Harder--Narasimhan filtration}
Let $f:\mathcal S\to\overline C$ be the semistable family of curves (after a finite
base change and compactification), with the two zeros extended as
sections $D_0,D_\infty$.  The extended Hodge bundle
$\cE=f_*\omega_{\mathcal S/\overline C}$ splits into eigenline bundles
$\mathcal L_\sigma$ indexed by the embeddings of $K$.
The tautological line bundle is denoted by $\mathcal L$. Yu--Zuo identify the relevant Weierstrass filtration with the Harder--Narasimhan
filtration of $\cE$ \cite[Proposition~5.5 and Theorem~5.6]{YZ}; the direct-sum
principle in \cite[Lemma~4.4]{BHM} identifies its steps with eigenlines.

\begin{proposition}
\label{prop:HN-twozero}
There is an ordering $\sigma_1=1,\sigma_2,\ldots,\sigma_g\in \operatorname{Gal}(K/\Q)$ such that
\[
 0\subset V_1\subset\cdots\subset V_g=\cE,
 \qquad V_j=\bigoplus_{i=1}^j\mathcal L_{\sigma_i},
 \qquad
 \frac{\deg(V_j/V_{j-1})}{\deg\mathcal L}=1-\frac{j-1}{g}.
\]
The bundle $V_j$ consists of differentials vanishing at least $g-j$
at each marked zero. A nonzero generator of
$\mathcal L_{\sigma_j}$ has exact order $g-j$ at both marked zeros.
\end{proposition}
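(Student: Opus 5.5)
We prove the proposition by first building the filtration by vanishing order and its degrees, following Yu--Zuo, and then identifying each step with a sum of eigenlines, following Bainbridge--Habegger--M\"oller.

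\emph{Step 1: the Weierstrass filtration.} For $0\le m\le g-1$, let $W_{g-m}\subset\cE$ be the saturated subsheaf of relative differentials vanishing to order at least $m$ along both $D_0$ and $D_\infty$. On a smooth fiber in $\Omega\mathcal M_g(g-1,g-1)^{\mathrm{hyp}}$, the hyperelliptic involution swaps the two zeros. The divisor $(g-1)(D_0+D_\infty)$ is canonical, and $D_0+D_\infty$ is a fiber of the hyperelliptic map. Hence the differentials vanishing to order $\ge m$ at both points form the space $H^0(K-m(D_0+D_\infty))$, which has dimension $g-m$; on the hyperelliptic model these are the forms $y^{-1}\,x^{k}dx$ for $k$ in a window of length $g-m$. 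So $W_j$ has rank $j$, and $W_1=\mathcal L$, since it is generated by $\omega$ with exact order $g-1$ at both points. Yu--Zuo \cite[Proposition~5.5 and Theorem~5.6]{YZ} identify $W_\bullet$ with the Harder--Narasimhan filtration. They also compute the graded slopes: the $j$-th graded piece has degree $\bigl(1-\tfrac{j-1}{g}\bigr)\deg\mathcal L$. This comes from multiplication by a section cutting out $D_0+D_\infty$, which gives isomorphisms of the graded pieces up to twists by $\mathcal O(D_0)$ and $\mathcal O(D_\infty)$. Those twists have degree $-\tfrac{1}{g}\deg\mathcal L$ (up to sign conventions), using $\omega^{\otimes}$-relations such as $\mathcal L\cong$ normal-bundle data along the zero sections. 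I would cite these slope values from \cite{YZ} rather than recompute them.

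\emph{Step 2: identification with eigenlines.} Real multiplication by $K$ splits $\cE=\bigoplus_\sigma\mathcal L_\sigma$. By \cite{Mol08}, the Kontsevich--Zorich/Deligne semisimplicity argument shows each $\mathcal L_\sigma$ is a line bundle with $\deg\mathcal L_\sigma\le\deg\mathcal L$. The graded slopes are strictly decreasing and pairwise distinct. By the direct-sum principle \cite[Lemma~4.4]{BHM}, the HN filtration of a direct sum of line bundles with distinct degrees is obtained by adding the summands in decreasing order of degree. Hence $V_j=\bigoplus_{i\le j}\mathcal L_{\sigma_i}$ for the ordering by degree, with $\deg\mathcal L_{\sigma_j}=\bigl(1-\tfrac{j-1}{g}\bigr)\deg\mathcal L$. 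In particular $\sigma_1=1$, since $V_1=\mathcal L$. Comparing with Step 1 gives $V_j=W_j$, which is the vanishing description.

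\emph{Step 3: exact orders.} A generator $s$ of $\mathcal L_{\sigma_j}$ lies in $V_j$, so it vanishes to order $\ge g-j$ at each marked zero. If it vanished to order $\ge g-j+1$ at one of them, then on a generic smooth fiber the hyperelliptic involution would act. This involution commutes with real multiplication, acts as $-1$ on all forms, and swaps $D_0$ and $D_\infty$. So the order at the other zero would also be $\ge g-j+1$, putting $s\in V_{j-1}$. That contradicts $V_{j-1}\cap\mathcal L_{\sigma_j}=0$.

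\emph{Main obstacle.} The delicate point is Step 1's slope computation together with the distinctness needed to apply \cite[Lemma~4.4]{BHM}. I would rely on \cite{YZ} for the slopes and only verify that our stratum component fits their hypotheses, namely that the hyperelliptic structure makes the two vanishing conditions equivalent.
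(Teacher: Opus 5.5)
The part of your proposal through Step~2 matches the paper. You cite Yu--Zuo for the Weierstrass flag $f_*\bigl(\omega_{\mathcal S/\overline C}(-(g-j)(D_0+D_\infty))\bigr)$ being the Harder--Narasimhan filtration with the stated slopes, and you use \cite[Lemma~4.4]{BHM} to identify its steps with sums of eigenlines, ordered by degree.

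\textbf{The gap: you only treat smooth fibers.} Your rank count in Step~1 and the hyperelliptic-involution argument in Step~3 determine the fibers of $V_j$ over the smooth locus only. The proposition is needed, and the paper proves it, fiberwise at the irreducible cusp. Corollary~\ref{prop:deg} uses that the cusp eigenform $\omega_h^j$ lies in $V_j$ but not in $V_{j-1}$ to conclude $\omega_h^j=Q_j(z+z^{-1})\omega_h^{\mathrm{norm}}$ with $\deg Q_j$ \emph{exactly} $j-1$. Since $V_j$ is a subbundle, its fiber at the cusp is a limit of smooth fibers. It is therefore only \emph{contained} in the space of stable forms vanishing to order $\ge g-j$ at $0$ and $\infty$. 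Nothing in your argument rules out that this space has dimension larger than $j$ on the stable fiber. If it did, the cusp fiber of $V_{j-1}$ could be a proper subspace of the order-$(g-j+1)$ space. Then ``$s\notin V_{j-1}$'' would no longer force $s$ to have exact order $g-j$. Vanishing orders along sections are only upper semicontinuous and can jump at special fibers, so the statement on smooth fibers does not propagate to the cusp by itself.

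\textbf{What is missing.} You need the explicit computation on the stable fiber, which the paper carries out:
\begin{enumerate}[label=\textup{(\roman*)},leftmargin=*]
\item Sections of the dualizing sheaf pull back to $\Pone_z$ as forms with at most simple poles at the node branches and opposite residues at each pair $x_i,x_i^{-1}$.
\item Hence they are anti-invariant under $z\mapsto z^{-1}$, and so have the form $A(z+z^{-1})\,dz/(zP(z+z^{-1}))$.
\item Vanishing to order $\ge g-j$ at both $0$ and $\infty$ is then exactly the condition $\deg A\le j-1$.
\end{enumerate}
This gives dimension $j$ at the cusp too, so $h^0$ is constant near the cusp. Cohomology and base change then identifies the fiber of $V_j$ there with this space. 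For a nonzero vector of $\mathcal L_{\sigma_j}$ at the cusp, one gets $\deg A=j-1$, hence exact order $g-j$ at \emph{both} $0$ and $\infty$. At the cusp, the symmetry $z\mapsto z^{-1}$ plays the role your hyperelliptic involution plays on smooth fibers.
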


\begin{proof}
The two marked zeros are exchanged by the hyperelliptic involution.
The Weierstrass flag in this case is
\[
 V_j=f_*\bigl(\omega_{\mathcal S/\overline C}
                 (-(g-j)(D_0+D_\infty))\bigr).
\]
Its successive normalized slopes are given by Yu--Zuo
\cite[Proposition~5.5 and Theorem~5.6]{YZ}, and they are strictly decreasing.

Because the Hodge bundle is a direct sum of eigenlines, the direct-sum principle \cite[Lemma~4.4]{BHM} identifies $V_j$ with the sum of the first $j$ eigenlines as a result of the uniqueness of Harder--Narasihman filtration. 

We check the fiberwise assertion, including the irreducible cusp. On a smooth
fiber $X_t$, the divisor $D_0(t)+D_\infty(t)$ is a fiber of the hyperelliptic
map, and the canonical divisor is equivalent to $(g-1)(D_0(t)+D_\infty(t))$.
It follows that
\[
 h^0\!\left(X_t,\omega_{X_t}(-(g-j)(D_0(t)+D_\infty(t)))\right)=j.
\]
At an irreducible cusp, sections of the dualizing sheaf pull back to
meromorphic differentials on $\Pone_z$ with at most simple poles at the node
branches and opposite residues at each pair. These differentials are
anti-invariant under $z\mapsto z^{-1}$. Dividing by
$dz/(zP(z+z^{-1}))$ gives an invariant rational function with possible poles
only at $0,\infty$. Imposing vanishing at least $g-j$ at both points bounds
these pole orders by $j-1$. Thus the required sections are exactly
\[
 A(z+z^{-1})\frac{dz}{zP(z+z^{-1})},\qquad
 A\in\C[Y],\quad \deg A\le j-1.
\]
This space has dimension $j$. The dimension is therefore constant in a
neighborhood of the cusp. Cohomology and base change identifies the fiber
of $V_j$ with this space, and similarly on the smooth fibers. For each $j\ge1$, a nonzero vector in the fiber of $\mathcal L_{\sigma_j}$ lies
in $V_j$ but not in $V_{j-1}$, by the direct-sum decomposition. The forms in $\mathcal L_{\sigma_j}$ has order exactly $g-j$ at both marked zeroes.
\end{proof}

The first consequence of the Harder--Narasimhan filtration is the following

\begin{corollary}\label{prop:deg}
For every $j$ there is a unique polynomial $Q_j\in\C[Y]$ of degree
exactly $j-1$ such that
\[
                    Q_j(y_i)=\frac{\sigma_j(c_i)}{c_i}.
\]
\end{corollary}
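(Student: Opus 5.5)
We will identify the fiber of each eigenline $\mathcal L_{\sigma_j}$ at the irreducible cusp and read off its residues. By the proof of Proposition~\ref{prop:HN-twozero}, the fiber of $V_j$ at the cusp is
\[
 \Bigl\{A(z+z^{-1})\,\frac{dz}{zP(z+z^{-1})}:\ \deg A\le j-1\Bigr\}.
\]
The normalized cusp differential is $\omega_h^{\mathrm{norm}}=R\,dz/(zP(z+z^{-1}))$, i.e.\ the case $A=R$ constant. It spans the fiber of $V_1=\mathcal L_{\sigma_1}=\mathcal L$, consistent with $\sigma_1=1$.

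Next we describe the eigenline $\mathcal L_{\sigma_j}$ at the cusp via its periods. The eigenform decomposition is defined by real multiplication by $K$ acting on $H^1$. Its limit at the cusp acts on the vanishing cycles, i.e.\ the residues at the $g$ node pairs. The generator of $\mathcal L_{\sigma}$ is the Galois conjugate eigenform, so its residue vector is proportional to $(\sigma(c_i))_i$. This mirrors the genus-two and \cite{BM,BHM} picture: the residues of the $\sigma$-eigenform at the node $x_i$ are $\sigma(c_i)$, up to a common scalar.

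We would justify this as follows. The residue map from sections of the dualizing sheaf to $\C^g$ (residues at $x_1,\dots,x_g$) is an isomorphism on the $g$-dimensional fiber. It is injective because an anti-invariant form with simple poles, zero residues, and pole orders bounded as above must vanish. The residue map is also equivariant for real multiplication: the $K$-action on the vanishing-cycle lattice $\Z^g$ with $c\in K^g$ as eigenvector has eigenvectors $(\sigma(c_i))$. This equivariance is the step I expect to be the main obstacle. It amounts to checking that the $\Q$-span structure from Proposition~\ref{prop:MM} (the $c_i$ form a basis of $K$, with the cylinder core curves as vanishing cycles) makes the residue vector of the $\sigma$-eigenform equal to $\sigma$ applied to $(c_i)$. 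I would cite this from \cite{BM,Mol08} rather than reprove it.

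Given this, let $\eta_j=A_j(z+z^{-1})\,dz/(zP(z+z^{-1}))$ be the generator of the fiber of $\mathcal L_{\sigma_j}$, scaled so that its residue at $x_i$ is $\sigma_j(c_i)$. We have $\operatorname{res}_{x_i}\eta_j=A_j(y_i)/(d_iP'(y_i))=A_j(y_i)c_i/R$. Setting $Q_j=A_j/R$ gives $Q_j(y_i)=\sigma_j(c_i)/c_i$.

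The degree of $Q_j$ is at most $j-1$ because $\eta_j\in V_j$. It is exactly $j-1$ because $\eta_j\notin V_{j-1}$; equivalently, $\eta_j$ has exact order $g-j$ at $0$, which forces $\deg A_j=j-1$. Uniqueness follows since the $y_i$ are $g$ distinct points and $\deg Q_j\le g-1$, so Lagrange interpolation determines $Q_j$ from its values.
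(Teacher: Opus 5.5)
Your proposal is correct and follows essentially the same route as the paper. You take the $\sigma_j$-eigenform at the cusp with residues $\sigma_j(c_i)$ at the unchanged node preimages; the paper cites \cite[\S6]{BHM} for this normalization, which is exactly the step you flag and plan to cite. You then write it as $Q_j(z+z^{-1})\,\omega_h^{\mathrm{norm}}$ using the fiber description of $V_j$ from Proposition~\ref{prop:HN-twozero}, get $\deg Q_j=j-1$ from membership in $V_j$ but not $V_{j-1}$, compare residues at $x_i$, and obtain uniqueness by interpolation at the $g$ distinct $y_i$.
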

\begin{proof}
As in Section~6 of \cite{BHM}, the eigenform associated with $\sigma_j$ at
this cusp can be normalized as
\[
 \omega_h^j
 =\sum_{i=1}^g\sigma_j(c_i)
 \left(\frac1{z-x_i}-\frac1{z-x_i^{-1}}\right)dz.
\]
Note that the node preimages are unchanged. This is the eigenform associated with
$\sigma_j$.

By Proposition~\ref{prop:HN-twozero}, the form $\omega_h^j$ belongs
to $V_j$ but not to $V_{j-1}$, with $V_0=0$. The description of $V_j$
in that proposition gives
\[
 \omega_h^j=Q_j(z+z^{-1})\omega_h^{\mathrm{norm}},
 \qquad \deg Q_j=j-1.
\]
Taking residues at $x_i$ proves the desired formula. $Q_j$ is uniquely determined by its values at the $g$ distinct points $y_i$.
\end{proof}

\subsection{Trace duality}

By Proposition~\ref{prop:MM}, there exist $\rho\in K^\times$ and
$q_i\in\Q_{>0}$ such that $h_i=\rho q_i c_i$. Set
$D_i=q_i^{-1}$ and $D=\diag(D_i)$.  Trace duality gives
\begin{equation*}
                  \Tr_{K/\Q}(\rho c_i c_j)=D_i\delta_{ij}.
\end{equation*}

\begin{lemma}\label{lem:top-duality}
Let $S=(\sigma_j(c_i))_{i,j}=(c_iQ_j(y_i))_{i,j}$, with rows indexed by nodes
and columns indexed by embeddings, and let
$H=\diag(\sigma_1(\rho),\ldots,\sigma_g(\rho))$.  Then

\[
             SHS^{\mathsf T}=D, \qquad S^{\mathsf T}D^{-1}S=H^{-1}.
\]

In particular, $S^{\mathsf T}D^{-1}S$ is diagonal. As a result, we have

\begin{enumerate}
    \item There exists $B\in L^\times$ such that $\sigma_g(c_i)=BD_i d_i$ for $i=1,\dots,g$.
    \item For any $u,v \in (\Z/N\Z)^\times$ with $\tau_{uv^{-1}}\notin\Gal(L/K)$, we have 
    \[
        \sum_{i=1}^g D_i(x_i^u-x_i^{-u})(x_i^v-x_i^{-v})=0.
    \]
    \item There exist $s\in(\Z/N\Z)^\times$ and $B_0\in L^\times$ with 
    \[
        c_i=B_0D_i(x_i^s-x_i^{-s}), \qquad i=1,\dots, g.
    \]
\end{enumerate}
\end{lemma}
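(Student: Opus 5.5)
First I establish the two matrix identities. Writing out the $(i,k)$ entry, $(SHS^{\mathsf T})_{ik}=\sum_j\sigma_j(c_i)\sigma_j(\rho)\sigma_j(c_k)=\Tr_{K/\Q}(\rho c_ic_k)=D_i\delta_{ik}$. This is exactly the trace-duality relation above, since $\{\sigma_j\}$ runs over all embeddings of $K$. The matrix $S$ is invertible: its columns are the Galois conjugates of the basis $(c_i)$, and $\det S^2$ is the discriminant of this basis, hence nonzero. Likewise $H$ is invertible because $\rho\neq0$. From $SHS^{\mathsf T}=D$ we get $S^{\mathsf T}D^{-1}S=H^{-1}$. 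Indeed, $D^{-1}=S^{-\mathsf T}H^{-1}S^{-1}$, and conjugating gives the identity. So $S^{\mathsf T}D^{-1}S$ is diagonal.

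Next I turn the off-diagonal vanishing into polynomial orthogonality. The entry $(j,l)$ of $S^{\mathsf T}D^{-1}S$ is $\sum_i D_i^{-1}c_i^2Q_j(y_i)Q_l(y_i)$. Since $\deg Q_j=j-1$ by Corollary~\ref{prop:deg}, the $Q_1,\dots,Q_g$ form a basis of polynomials of degree $<g$. Vanishing of $\langle Q_g,Q_l\rangle$ for $l<g$ therefore says $\sum_i D_i^{-1}c_i^2\,Q_g(y_i)\,y_i^m=0$ for $0\le m\le g-2$. Now write $w_i=D_i^{-1}c_i^2Q_g(y_i)=D_i^{-1}c_i\sigma_g(c_i)$. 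The vector $(w_i)$ is orthogonal to $1,y,\dots,y^{g-2}$ evaluated at the distinct points $y_i$. By the Vandermonde/Lagrange argument, the space of such vectors is one-dimensional, spanned by $(1/P'(y_i))$. So $w_i=B'/P'(y_i)$ for some $B'$, and $B'\neq0$ because $\sigma_g(c_i)\ne0$. Substituting $c_i=R/(d_iP'(y_i))$ gives $\sigma_g(c_i)=B'D_i/(c_iP'(y_i))=(B'/R)D_id_i$. This proves (1) with $B=B'/R$. To get $B\in L^\times$, I note that $\sigma_g(c_i),D_i,d_i$ all lie in $L$.

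For (3), I apply a Galois element. Since $\sigma_g\in\Gal(K/\Q)$, I lift $\sigma_g^{-1}$ to some $\tau_s$ on $L$. Applying it to (1) gives $c_i=\tau_s(B)D_i\tau_s(d_i)=\tau_s(B)D_i(x_i^s-x_i^{-s})$, using that $D_i\in\Q$. So (3) holds with $B_0=\tau_s(B)$.

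For (2), I combine (1) with the diagonal identity $SHS^{\mathsf T}=D$ differently. I use the identity in the form $\sum_i D_i^{-1}\sigma_j(c_i)\sigma_l(c_i)=\delta_{jl}\sigma_j(\rho)^{-1}$. By (3), $\sigma_j(c_i)=\tau_{u}(B_0)D_i(x_i^{u}-x_i^{-u})$, where $\tau_u$ is a lift of $\sigma_j$ composed with $\tau_s$. Hence for lifts $u,v$ of distinct embeddings, $\sum_i D_i(x_i^u-x_i^{-u})(x_i^v-x_i^{-v})=0$, after dividing by the nonzero factor $\tau_u(B_0)\tau_v(B_0)$. Every pair $u,v$ with $\tau_{uv^{-1}}\notin\Gal(L/K)$ arises this way. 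Indeed, $\tau_u|_K$ and $\tau_v|_K$ are then distinct embeddings, so they equal $\sigma_j\circ\cdots$ and $\sigma_l\circ\cdots$ for some $j\ne l$. Formally, I write $\tau_u=\tau_{u s^{-1}}\tau_s$ and set $\sigma=\tau_{us^{-1}}|_K$.

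The main obstacle is the middle step. It requires care that the orthogonality relations pin down the vector $(w_i)$ up to scalar, i.e.\ that exactly $g-1$ independent conditions hold. This rests on $\deg Q_j=j-1$ exactly, together with the distinctness of the $y_i$.
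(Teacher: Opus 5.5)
Your proof is correct and follows the paper's argument. Both use the same matrix identities, a one-dimensional-kernel argument for (1), a lift of $\sigma_g^{-1}$ for (3), and Galois conjugation plus column orthogonality for (2). The only real variation is in (1): you identify the kernel vector through the Lagrange identity $\sum_i y_i^m/P'(y_i)=0$ and the residue formula $c_i=R/(d_iP'(y_i))$, whereas the paper gets $(d_i)\in\ker T$ from $\omega_h^j(0)=0$ for $j<g$; these are the same computation up to the diagonal rescaling by $(c_i)$.

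There is one harmless slip in (2): the common factor should be $\tau_{us^{-1}}(B_0)=\tau_u(B)$, not $\tau_u(B_0)$. It is still a nonzero constant independent of $i$, so the cancellation goes through unchanged.
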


\begin{proof}
The $(i,j)$-entry of $SHS^{\mathsf T}$ is
$\sum_k\sigma_k(\rho c_i c_j)=D_i\delta_{ij}$. So $SHS^{\mathsf T}=D$.
Since $D$ is invertible, $S$ is also invertible. Taking inverses gives
$S^{\mathsf T}D^{-1}S=H^{-1}$.

Let $T$ consist of the first $g-1$ rows of $S^{\mathsf T}$. Then
$\operatorname{rank}T=g-1$. For $j<g$, Proposition~\ref{prop:HN-twozero}
gives $\omega_h^j(0)=0$. Expanding its partial fractions at $0$ gives
\[
 \sum_i\sigma_j(c_i)d_i=0,\qquad j=1,\ldots,g-1.
\]
Thus $(d_i)_i \in \ker T$. The off-diagonal entries in the last
column of $S^{\mathsf T}D^{-1}S$ give
\[
 \sum_i\frac{\sigma_j(c_i)\sigma_g(c_i)}{D_i}=0,
                    \qquad j=1,\ldots,g-1.
\]
Hence also $(\sigma_g(c_i)/D_i)_i \in \ker T$. Both vectors are
nonzero, and $\ker T$ is one-dimensional. Therefore
\[
 \sigma_g(c_i)=BD_i d_i,\qquad i=1,\ldots,g,
\]
where $B=\sigma_g(c_1)/(D_1d_1)\in L^\times$. This proves (1).

Apply $\tau_u,\tau_v\in\Gal(L/\Q)$ to (1). Since $D_i$ is rational,
\[
 \begin{aligned}
 \tau_u(\sigma_g(c_i))&=\tau_u(B)D_i(x_i^u-x_i^{-u}),\\
 \tau_v(\sigma_g(c_i))&=\tau_v(B)D_i(x_i^v-x_i^{-v}).
 \end{aligned}
\]
Both automorphisms preserve $K$. If $\tau_{uv^{-1}}\notin\Gal(L/K)$, their
restrictions to $K$ are distinct, so
$\tau_u|_K\circ\sigma_g$ and $\tau_v|_K\circ\sigma_g$ are distinct
automorphisms of $K$. Thus the columns of $S$ corresponding to $\tau_u|_K\circ\sigma_g$ and $\tau_v|_K\circ\sigma_g$ are orthogonal
for the metric $D^{-1}$. Substituting the preceding formulas and canceling the
nonzero factor $\tau_u(B)\tau_v(B)$ proves (2).

Finally, choose a lift $\tau_s$ of $\sigma_g^{-1}$ to $\Gal(L/\Q)$.
Applying it to (1) gives
\[
 c_i=\tau_s(B)D_i(x_i^s-x_i^{-s}),\qquad i=1,\ldots,g.
\]
This proves (3) with $B_0=\tau_s(B)\in L^\times$.
\end{proof}

\section{The cyclotomic extension \texorpdfstring{$L/\Q$}{L/Q}}\label{sec:degree}

In this section, we study the cyclotomic field $L=\Q(x_i)=\Q(\zeta_N)$.
Denote $L^+$ its maximal real subfield.

\begin{theorem}\label{thm:degree}
Suppose that \(\Omega\mathcal{M}_g(g-1,g-1)^{\mathrm{hyp}}\) has an
algebraically primitive Teichm\"uller curve with trace field $K$ and
$L=\Q(x_1,\ldots,x_g)$. Then $[L:K]=2$ or $4$, thus
$[L:\Q]=2g$ or $4g$. More precisely,
\begin{enumerate}[label=\textup{(\roman*)},leftmargin=*]
\item If $[L:K]=2$ \emph{(Type A)}, then $K=L^+$ and
\[
 \Gal(L/K)=\{\tau_1,\tau_{-1}\}.
\]
\item If $[L:K]=4$ \emph{(Type B)}, then $4\mid N$, every node
exponent $n_i$ is odd, and
\[
 \Gal(L/K)=\{\tau_1,\tau_{-1},\tau_{1+N/2},\tau_{-1-N/2}\}.
\]
\end{enumerate}
For $u,v\in(\Z/N\Z)^\times$, we have
$\tau_{uv^{-1}}\in\Gal(L/K)$ if and only if
$u\equiv\pm v\pmod N$ in Type A, and if and only if
$u\equiv\pm v$ or $u\equiv\pm v+N/2\pmod N$ in Type B.
\end{theorem}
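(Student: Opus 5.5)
The plan is to turn Lemma~\ref{lem:top-duality}(2) into an eigenvector statement for the vector $d=(d_i)_i$ under $\Gal(L/K)$, and then to read off $\Gal(L/K)$ from elementary trigonometry. Write $G=(\Z/N\Z)^\times$ and let $H\le G$ correspond to $\Gal(L/K)$. Since $K$ is totally real, complex conjugation $\tau_{-1}$ lies in $H$, and $[G:H]=[K:\Q]=g$. For $u\in G$ put $w_u=(x_i^u-x_i^{-u})_i=(\tau_u(d_i))_i\in\C^g$; it is nonzero because $d_i\ne0$.

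First, the entries of $w_u$ are purely imaginary, since $\overline{w_u}=w_{-u}=-w_u$ and the $D_i$ are positive rationals. Hence the bilinear form $\sum_i D_ia_ib_i$ restricted to such vectors equals minus the positive definite Hermitian form $\langle a,b\rangle=\sum_iD_ia_i\overline{b_i}$. Lemma~\ref{lem:top-duality}(2) therefore says that $w_u$ and $w_v$ are Hermitian-orthogonal whenever $uH\ne vH$. Choosing one representative in each of the $g$ cosets gives $g$ nonzero, pairwise orthogonal vectors, which form an orthogonal basis of $\C^g$. Now take $h\in H$. The vector $w_h$ is orthogonal to $w_v$ for every $v\notin H$, so it lies in the line $\C w_1$. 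Thus there is $\lambda_h$ with
\[
 \sin(2\pi n_ih/N)=\lambda_h\sin(2\pi n_i/N),\qquad i=1,\dots,g .
\]

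Second, I would show $\lambda_h=\pm1$. The quantity $a=\langle w_1,w_1\rangle=\sum_iD_i|d_i|^2$ lies in $L^+$, and since $\tau_h$ commutes with complex conjugation, $\tau_h(a)=\langle w_h,w_h\rangle=|\lambda_h|^2a$. Every $\lambda_h$ is real: it is a ratio of two real sines, and the denominators are nonzero because $d_i\ne0$. Applying $\tau_u$ to the eigen-relation gives $w_{uh}=\tau_u(\lambda_h)w_u$, so $h\mapsto\lambda_h$ is a cocycle. Its values are real algebraic integers up to units, being ratios of sines, i.e.\ ratios of cyclotomic units or their multiples.

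If needed, I would instead bound $\lambda_h$ by a maximality argument. Pick the index $i$ maximizing $|\sin(2\pi n_ih/N)|$ over the relevant orbit, and run the same argument with $h^{-1}\in H$. Together these give $|\lambda_h|\le1$ and $|\lambda_{h^{-1}}|\le1$ along the orbit, and since the cocycle relation gives $\lambda_h\,\tau_h(\lambda_{h^{-1}})=1$, this forces $|\lambda_h|=1$.

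Third, comes the arithmetic step, which I expect to be the main obstacle. From $\sin(2\pi n_ih/N)=\pm\sin(2\pi n_i/N)$ with a common sign, elementary trigonometry gives, for each $i$, either $n_ih\equiv\pm n_i\pmod N$ or $n_ih\equiv\pm n_i+N/2\pmod N$, the latter when $N$ is even. The difficulty is that the choice of branch may a priori vary with $i$. I would handle it by writing $h\equiv\epsilon_i\pmod{N/\gcd(n_i,N)}$ or $h\equiv\epsilon_i+N/(2\gcd(n_i,N))$, with $\epsilon_i=\pm1$ fixed by the common sign of $\lambda_h$. Then I would combine the congruences by the Chinese remainder theorem, using $\gcd(n_1,\ldots,n_g,N)=1$. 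The expected outcome is $h\equiv\pm1$ or $h\equiv\pm1+N/2\pmod N$. This gives $|H|\le4$, and with $\tau_{\pm1}\in H$ we get $[L:K]\in\{2,4\}$.

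Finally, I would sort out the two types. If $[L:K]=2$ then $H=\{\pm1\}$, so $K=L^+$. If $[L:K]=4$ then $1+N/2$ must be a unit with $n_i(1+N/2)\equiv\pm n_i$ or $\pm n_i+N/2$ compatibly for all $i$. Parity considerations should then force $4\mid N$, since $1+N/2$ odd requires $N/2$ even, and all $n_i$ odd, since for $n_i$ even we have $n_iN/2\equiv0$, which gives the wrong shift. The coset criterion for $\tau_{uv^{-1}}\in\Gal(L/K)$ follows at once from the explicit description of $H$.
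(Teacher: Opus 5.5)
Your Step 1 is sound. It gives $\tau_h(d_i)=\lambda_h d_i$ with $\lambda_h$ independent of $i$, which is equivalent to $d_i/d_1\in K$; the paper reads this off directly from Lemma~\ref{lem:top-duality}(1). After that there is a genuine gap. The reason is that Lemma~\ref{lem:top-duality} only controls the $d_i$, and you never use any information about how $\Gal(L/K)$ moves the $y_i$.

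\textbf{Step 2.} It does not establish $\lambda_h=\pm1$.
\begin{itemize}
\item The identity $\tau_h(a)=|\lambda_h|^2a$ gives no bound.
\item The maximality argument presupposes that multiplication by $h$ permutes $\{\pm n_i\}$. That is not known a priori: for $h=1+N/2$ in Type B one has $x_i\mapsto -x_i$, which need not be a node.
\item The relation $\lambda_h\tau_h(\lambda_{h^{-1}})=1$ cannot be combined with $|\lambda_{h^{-1}}|\le1$, because $\tau_h$ does not preserve absolute values on $L^+$.
\end{itemize}

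\textbf{Step 3.} It is false as stated. Take $N=24$, $(n_1,n_2,n_3)=(3,4,8)$ and $h=5$. Then $\gcd(n_1,n_2,n_3,N)=1$ and the values $y_i=\sqrt2,1,-1$ are distinct. For all three $i$,
\[
\sin(2\pi\cdot5n_i/24)=-\sin(2\pi n_i/24),
\]
with a common sign, yet $5\not\equiv\pm1,\pm1+12\pmod{24}$. Concretely, $\tau_5$ sends $\zeta_{24}^3\mapsto-\zeta_{24}^3$ but $\zeta_{24}^4\mapsto\zeta_{24}^{-4}$. So it scales every $d_i$ by $-1$ while negating only $y_1$, and no affine map does that to $\sqrt2,1,-1$.

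In this example the units $h$ with uniform $\tau_h(d_i)=\pm d_i$ form the group $\{1,5,19,23\}$. It has order $4$ but is not $\{\pm1,\pm1+N/2\}$, and two of the $n_i$ are even. For three nodes of orders $8,12,20$ (so $N=120$), the analogous group has order $8$. So neither $|H|\le4$ nor the Type B description can be extracted from information about the $d_i$ alone.

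\textbf{The missing ingredient} is Corollary~\ref{prop:deg}, i.e.\ the Harder--Narasimhan input. Since $Q_2$ has degree exactly one, $\sigma_2(c_i)/c_i=a+by_i$ with $b\neq0$. Hence $(y_i-y_1)/(y_2-y_1)\in K$, and every $\tau\in\Gal(L/K)$ acts on \emph{all} $y_i$ by one affine map $y_i\mapsto a_\tau y_i+b_\tau$.

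Combine this with your uniform scaling $\tau(d_i)=c_\tau d_i$ and apply $\tau$ to $y_i^2-d_i^2=4$. This gives a polynomial of degree at most two vanishing at the $g\ge3$ distinct $y_i$, so $b_\tau=0$ and $a_\tau^2=c_\tau^2=1$. That one step proves $\lambda_h=\pm1$ and replaces your congruence analysis. Each $x_i=(y_i+d_i)/2$ is moved by the same rule $x_i\mapsto\pm x_i^{\pm1}$ for every $i$. Since the $x_i$ generate $L$, at most four automorphisms fix $K$.

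The Type B assertions then follow from the uniform rule $x_i\mapsto-x_i$ for some $\tau_v$. This gives $(v-1)n_i\equiv N/2\pmod N$ for all $i$. Together with $\gcd(n_1,\dots,n_g,N)=1$, it yields $v\equiv1+N/2$, then $4\mid N$, and then that all $n_i$ are odd.
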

\begin{proof}
By Corollary~\ref{prop:deg}, we have
\[
 \frac{\sigma_2(c_i)}{c_i}=a+by_i,\qquad b\ne0.
\]
Set
\[
 z_i\coloneqq\frac{y_i-y_1}{y_2-y_1}
 =\frac{\sigma_2(c_i)/c_i-\sigma_2(c_1)/c_1}
 {\sigma_2(c_2)/c_2-\sigma_2(c_1)/c_1}\in K.
\]
Then $z_1=0$, $z_2=1$, and $y_i=\alpha+\beta z_i$ with
$\alpha=y_1$, $\beta=y_2-y_1\ne0$. By (1) of
Lemma~\ref{lem:top-duality},
\[
 \frac{d_i}{d_1}
 =\frac{D_1\sigma_g(c_i)}{D_i\sigma_g(c_1)}\in K.
\]
For $\tau\in\Gal(L/K)$, set
\[
 a_\tau=\frac{\tau(\beta)}{\beta},\qquad
 b_\tau=\tau(\alpha)-a_\tau\alpha,\qquad
 c_\tau=\frac{\tau(d_1)}{d_1}.
\]
Then $a_\tau,c_\tau\ne0$ and, for every $i$,
\[
 \tau(y_i)=a_\tau y_i+b_\tau,\qquad
 \tau(d_i)=c_\tau d_i.
\]
Applying $\tau$ to $y_i^2-d_i^2=4$ gives
\[
 (a_\tau^2-c_\tau^2)y_i^2+2a_\tau b_\tau y_i
       +b_\tau^2+4c_\tau^2-4=0.
\]
This polynomial of degree at most two vanishes at the $g\ge3$ distinct
$y_i$, so all its coefficients vanish. Since $a_\tau\ne0$, we obtain
\[
 b_\tau=0,\qquad a_\tau^2=c_\tau^2=1.
\]
Thus $\tau$ sends every $x_i=(y_i+d_i)/2$ by the same one of the four
rules
\[
 x_i\longmapsto x_i,\quad x_i^{-1},\quad -x_i,\quad -x_i^{-1}.
\]
Since the $x_i$ generate $L$, at most four automorphisms fix $K$.
The extension $L/K$ is Galois, and complex conjugation is a nontrivial
automorphism fixing the totally real field $K$. Hence $[L:K]=2$ or $4$.

In Type A, complex conjugation generates $\Gal(L/K)$, so $K=L^+$.
In Type B, all four sign choices occur. In particular, there is an
automorphism $\nu$ fixing $K$ and sending every $x_i$ to $-x_i$.
Thus $-1=\nu(x_i)/x_i$ is an $N$-th root of unity, and $N$ is even.
Let $\nu=\tau_v$. Then $v$ is odd and
\[
 (v-1)n_i\equiv N/2\pmod N.
\]
Since $\gcd(n_1,\ldots,n_g,N)=1$, doubling these congruences gives
$2(v-1)\equiv0\pmod N$. Since $\nu\ne1$, we have
$v\equiv1+N/2\pmod N$. Its odd parity proves $4\mid N$. Substitution
gives $(n_i-1)N/2\equiv0\pmod N$, so every $n_i$ is odd. Complex
conjugation and $\nu$ generate $\Gal(L/K)$, proving the stated kernel.
\end{proof}

These types refer to the chosen cusp, not initially to the whole curve.
In the next section, we prove that no Type B cusp satisfies the preceding
conditions.

\section{Nonexistence for non-prime \texorpdfstring{$2g+1$}{2g+1}}\label{sec:reduction}

In this section, we prove that an algebraically primitive Teichm\"uller curve exists only if $2g+1$ is prime. More precisely, we have the following.

\begin{theorem}\label{thm:cusp-rigidity}
For an algebraically primitive Teichm\"uller curve, $p=2g+1$ is prime,
$N=p$ or $2p$, and $K=\Q(\zeta_p)^+$, the maximal real subfield of $\Q(\zeta_p)$. The node preimages
$\{x_i,x_i^{-1}:1\le i\le g\}$ are exactly the primitive $N$-th roots,
and the weights $D_i$ are all equal. In the
original normalization,
\[
 \omega_h^{\mathrm{norm}}=R\frac{z^{g-1}}{\Phi_N(z)}\,dz,\qquad R\ne0,
\]
where $\Phi_N$ is the $N$-th cyclotomic polynomial. Up to a common nonzero scalar and the coordinate change $z\mapsto-z$, the
stable differential is represented by
\begin{equation}\label{eq:polygon-cusp}
              \omega_p=\frac{z^{g-1}}{\Phi_p(z)}\,dz,
                       \qquad \zeta_p^a\sim\zeta_p^{-a},
\end{equation}
where $\Phi_p(z)=1+z+\dots+z^{p-1}$ is the $p$-th cyclotomic polynomial.
\end{theorem}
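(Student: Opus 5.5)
The plan is to turn Lemma~\ref{lem:top-duality}(2) into a statement about a single even function on $\Z/N\Z$, and then use the rigidity of that function to pin down the nodes. Put
\[
 G(m)\coloneqq\sum_{i=1}^g D_i\,(x_i^m+x_i^{-m}),\qquad m\in\Z/N\Z .
\]
Expanding the product in Lemma~\ref{lem:top-duality}(2) gives
\[
 G(u+v)=G(u-v)
\]
for all units $u,v$ with $\tau_{uv^{-1}}\notin\Gal(L/K)$. By Theorem~\ref{thm:degree}, this holds whenever $u\not\equiv\pm v$ in Type~A. In Type~B it holds whenever $u\not\equiv\pm v$ and $u\not\equiv\pm v+N/2$.

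The values of $G$ are the Fourier coefficients of the even measure $\nu=\sum_i D_i(\delta_{n_i}+\delta_{-n_i})$ on $\Z/N\Z$. The next step is to see how many differences $u\pm v$ of units are chained together by these relations. I would show that they force $G(m)$ to depend only on $\gcd(m,N)$, at least for $m$ in the sets $U+U$ and $U-U$, where $U=(\Z/N\Z)^\times$. I would then invert the Fourier transform, together with the fact that $G(0)=2\sum D_i>0$. The goal is to conclude that $\nu$ is constant on each $U$-orbit $\{\gcd(m,N)=e\}$.

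Two facts then finish the counting. The $2g$ node preimages are distinct and avoid $\pm1$. And $\varphi(N)=2g$ or $4g$ by Theorem~\ref{thm:degree}. Together these should force the support of $\nu$ to be exactly the primitive $N$-th roots of unity, with all $D_i$ equal. This requires $\varphi(N)=2g$, which rules out Type~B. I expect this step to be the main obstacle: in Type~B the orthogonality relations are weaker, and sparse supports in several $\gcd$-classes must be excluded. My first attempt would use $\gcd(n_1,\dots,n_g,N)=1$ and the fact from Theorem~\ref{thm:degree} that all $n_i$ are odd when $4\mid N$. For a residual case I would fall back on the moment equations~\eqref{eq:moments}.

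Once the nodes are the primitive $N$-th roots, the formula $\omega_h^{\mathrm{norm}}=Rz^{g-1}dz/\Phi_N(z)$ is immediate, since $\prod_i(z-x_i)(z-x_i^{-1})=\Phi_N(z)$. The residues at $\zeta=\zeta_N^n$ are $c=R\zeta^{g-1}/\Phi_N'(\zeta)$. I would compare these with Lemma~\ref{lem:top-duality}(3), namely $c_i=B_0D(x_i^s-x_i^{-s})$ with constant $D$. This gives the identity
\[
 \zeta^{g-1}(\zeta^s-\zeta^{-s})\,\Phi_N'(\zeta)=\text{const}
\]
over all primitive $\zeta$. Writing $\Phi_N'(\zeta)$ via $\Phi_N(z)=\prod_{e\mid N}(z^{e}-1)^{\mu(N/e)}$ should show that this is possible only when $N$ has a single odd prime factor, with no higher powers. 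That means $N=p$ or $2p$, so $g=(p-1)/2$ and $p=2g+1$ is prime. As a cross-check, linear independence of the $c_i$ over $\Q$ (they form a basis of $K$) also fails for composite $N$, because of Ramanujan-sum relations among the sines.

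Finally, $K=L^+=\Q(\zeta_p)^+$ by Type~A. If $N=2p$, the substitution $z\mapsto -z$ maps primitive $2p$-th roots to primitive $p$-th roots, and it changes $z^{g-1}/\Phi_{2p}(z)$ into a multiple of $z^{g-1}/\Phi_p(z)$. This yields~\eqref{eq:polygon-cusp}, with the nodes glued as $\zeta_p^a\sim\zeta_p^{-a}$.
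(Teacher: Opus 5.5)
There is a genuine gap at the central step. You assert that the relations $G(u+v)=G(u-v)$ force $G(m)$ to depend only on $\gcd(m,N)$, but you give no mechanism for this, and it is the wrong target anyway. Because the $D_i$ are rational, $G(um)=\tau_u(G(m))$. So ``$G$ depends only on the gcd'' just says $G$ is $\Q$-valued, i.e.\ the weighted node set is Galois-stable.

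That property, together with distinctness, $x_i\ne\pm1$ and $\varphi(N)=2g$, does \emph{not} force the support to be the primitive $N$-th roots. For $N=105$, the primitive $35$th, $21$st, $15$th and $5$th roots form a Galois-stable, inversion-closed set. It has $24+12+8+4=48=\varphi(105)$ points, and the lcm of their orders is $105$.

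What the paper extracts is much stronger. In Lemmas~\ref{lem:comparison} and~\ref{lem:constancy}, a CRT choice of an intermediate class $c$ comparable to both $a$ and $b$ shows that $G$ is \emph{constant} on the admissible nonzero classes. The only deviation is the explicit $f$-term when $12\mid N$; in Type~B the antiperiod relation makes $G$ equal to $0$ or $Ef$. Fourier inversion then makes the coefficients positive on every nonzero inversion class, with at most one exception when $12\mid N$. Hence the support meets essentially every class, and $(N-1)/2\le g=\varphi(N)/2$ (or its folded analogue). This count is what yields $N\in\{p,2p,2^k\}$ with $p=2g+1$ prime, and what reduces Type~B to $N=2^k$. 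Primality comes from this support count, not from residues.

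Second, for even $N$ all units are odd, so every $u\pm v$ is even. The relations therefore say nothing about $G$ at odd arguments. You cannot invert on $\Z/N\Z$; you only recover $\mu_\ell+\mu_{\ell+N/2}$, which cannot distinguish $\zeta_N^{\ell}$ from $-\zeta_N^{\ell}$.

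The paper needs a separate input here. Substituting Lemma~\ref{lem:top-duality}(3) into \eqref{eq:moments} gives $F(s+r)=F(s-r)$ for $1\le r\le g-1$. The paper then shows that the ratios $(s+r)(s-r)^{-1}$ generate the relevant unit group, which makes $F$ constant on units. This excludes $N=2^k$ in both types (Proposition~\ref{prop:no-powers-two}). It also forces every node to be a primitive $2p$-th root when $N=2p$ (Proposition~\ref{prop:prime-signs}). Your proposal leaves both Type~B and this sign problem open.

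Your residue step also has problems. It is misstated: the quantity that must be constant is $\zeta^{1-g}(\zeta^s-\zeta^{-s})\Phi_N'(\zeta)$. The claimed consequence $N\in\{p,2p\}$ is unproved. The suggested cross-check is false: for squarefree $N$ such as $15$, the primitive roots form a normal basis, so the elements $\zeta^n-\zeta^{-n}$ are $\Q$-linearly independent.
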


\subsection{Fourier coefficients and the weighted node counting}

Define a function
\begin{equation*}
                   F(a)=\sum_{i=1}^g D_i\cos(2\pi n_i a/N).
\end{equation*} for $a\in\Z/N\Z$.
The product-to-sum identity gives
\begin{align}\label{eq:F-equality}
 F(u-v)-F(u+v)
 &=2\sum_iD_i\sin(2\pi n_i u/N)\sin(2\pi n_i v/N)\notag\\
 &=-\frac12\sum_iD_i(x_i^u-x_i^{-u})(x_i^v-x_i^{-v}).
\end{align} 
By (2) of Lemma~\ref{lem:top-duality}, we have $F(u-v)=F(u+v)$ if $\tau_{uv^{-1}}\notin\Gal(L/K)$. 

We can express $F(a)$ in a different way:
\begin{equation*}
 \mu_\ell=\frac12\sum_{i=1}^gD_i
       \bigl(\delta_{n_i,\ell}+\delta_{-n_i,\ell}\bigr),
 \qquad
 F(a)=\sum_{\ell\bmod N}\mu_\ell e^{2\pi i a\ell/N}.
\end{equation*}
This coefficient records the locations of the pair of nodes $x_i,x_i^{-1}$ 
with their weights $D_i$. 

If $N$ is odd, set
\begin{equation*}
       \lambda_\ell\coloneqq \mu_\ell,\qquad G(a)=F(a),
                    \qquad \ell,a\in\Z/N\Z.
\end{equation*}
If $N$ is even, the units $u,v\in(\Z/N\Z)^\times$ are odd, so
\eqref{eq:F-equality} compares only $F$ at even residue classes. These classes
cannot distinguish the positions $\ell$ and $\ell+N/2$, since
\[
 e^{2\pi i a(\ell+N/2)/N}=e^{2\pi i a\ell/N}
\] for even $a$. So we set
\begin{equation*}
 \lambda_\ell=\mu_\ell+\mu_{\ell+N/2},
               \qquad G(a)=F(2a).
\end{equation*} for $\ell,a\in \Z/(N/2)\Z$. 
Then
\begin{equation*}
            G(a)=\sum_{\ell\bmod N/2}\lambda_\ell
                                             e^{4\pi i a\ell/N}.
\end{equation*}
Now $\lambda_\ell$ records the node pairs chosen from $\{\zeta_N^{\pm\ell}, -\zeta_N^{\pm\ell}\}$. The coefficients $\lambda_{\ell}$ always satisfy:
\begin{equation*}
 \lambda_\ell\ge0,\qquad \lambda_{-\ell}=\lambda_\ell,
 \qquad \lambda_0=0,
 \qquad \Lambda:=G(0)=\sum_iD_i>0.
\end{equation*}
The vanishing of $\lambda_0$ follows from $x_i\ne\pm1$: for odd $N$,
$\lambda_0=\mu_0$, while for even $N$, $\lambda_0=\mu_0+\mu_{N/2}$.

In Type B, we also have
\begin{equation*}
 \lambda_\ell=0\quad\text{for }\ell\text{ even}
\end{equation*} 
because every node exponent $n_i$ is odd by Theorem~\ref{thm:degree},
and $N/2$ is even. Moreover,
\begin{equation}\label{eq:B-antiperiod}
 G(a+N/4)=\sum_iD_i(-1)^{n_i}\cos(4\pi a n_i/N)=-G(a).
\end{equation}

Each node pair contributes to at most one nonzero inversion class. Consequently
\begin{equation*}
       \#\bigl(\{\ell:\lambda_\ell>0\}/\{\pm1\}\bigr)\le g.
\end{equation*}

The Fourier inversion formula gives 
\begin{equation*}
 \lambda_\ell=
 \begin{cases}
 \displaystyle\frac1N\sum_{a\bmod N}G(a)e^{-2\pi i a\ell/N},
                                                   &N\text{ odd},\\[6pt]
 \displaystyle\frac2N\sum_{a\bmod N/2}G(a)e^{-4\pi i a\ell/N},
                                                   &N\text{ even}.
 \end{cases}
\end{equation*}

\subsection{Computation of Fourier coefficients}\label{subsec:direct-comparison}

Now we compare values of $G$. The comparison obtained
from \eqref{eq:F-equality} excludes $a=0$ in Type A and $a=0,N/4$ in
Type B. We call every other residue class \emph{admissible}. Recall that
$a\in\Z/N\Z$ if $N$ is odd and $a\in\Z/(N/2)\Z$ if $N$ is even.

\begin{lemma}\label{lem:comparison}
Let $a,b$ be admissible. Then $G(a)=G(b)$ whenever
\begin{equation}\label{eq:comparison-condition}
 b\not\equiv\pm a\pmod p
       \quad\text{for every odd prime }p\mid N,
\end{equation}
and, if $4\mid N$, $a$ and $b$ have opposite parity.

\end{lemma}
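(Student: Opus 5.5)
The plan is to realize the pair $(a,b)$ as $(u-v,u+v)$, suitably rescaled, for a pair of units $u,v\in(\Z/N\Z)^\times$ such that $\tau_{uv^{-1}}\notin\Gal(L/K)$. Then \eqref{eq:F-equality}, combined with (2) of Lemma~\ref{lem:top-duality}, gives $F(u-v)=F(u+v)$, and this translates directly into $G(a)=G(b)$. The exclusion criterion for $\tau_{uv^{-1}}$ is taken from the final sentence of Theorem~\ref{thm:degree}. I would split into the cases $N$ odd and $N$ even.

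\emph{Case $N$ odd.} Here $G=F$, $2$ is invertible mod $N$, and we are necessarily in Type A, since Type B forces $4\mid N$. Set $u=(b+a)/2$ and $v=(b-a)/2$ mod $N$, so that $u-v\equiv -a$ and $u+v\equiv b$. Since $F$ is even, $F(-a)=F(a)$. The element $u$ is a unit iff $b+a\not\equiv 0 \pmod p$ for every prime $p\mid N$, and $v$ is a unit iff $b-a\not\equiv0\pmod p$. Both follow from \eqref{eq:comparison-condition}. It remains to check $u\not\equiv\pm v$. We have $u\equiv v$ iff $a\equiv0$, and $u\equiv -v$ iff $b\equiv 0$, both excluded by admissibility. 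Hence $F(a)=F(b)$.

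\emph{Case $N$ even.} Now $G(a)=F(2a)$ with $a\in\Z/(N/2)\Z$. I look for units with $u-v\equiv 2a$ and $u+v\equiv 2b \pmod N$. Take $u=a+b+t$ and $v=b-a+t$ for some $t\in\{0,N/2\}$; any such $t$ keeps both congruences, since $2t\equiv0$.

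For each odd prime $p\mid N$ we have $p\mid N/2$, so $u\equiv a+b$ and $v\equiv b-a \pmod p$. These are nonzero mod $p$ by \eqref{eq:comparison-condition}. It remains to make $u,v$ odd, and $u,v$ have the same parity as each other.
\begin{itemize}
\item If $N\equiv 2\pmod 4$, then $N/2$ is odd, and choosing $t$ appropriately makes $a+b+t$ odd.
\item If $4\mid N$, then $N/2$ is even, so the parity of $u$ is that of $a+b$. The hypothesis that $a$ and $b$ have opposite parity is used exactly here; it is also what makes the parity of $a+b$ well defined mod $N/2$.
\end{itemize}

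Finally I exclude $\tau_{uv^{-1}}\in\Gal(L/K)$ using Theorem~\ref{thm:degree}. The condition $u\equiv v$ means $2a\equiv0\pmod N$, i.e.\ $a\equiv0$ in $\Z/(N/2)\Z$. The condition $u\equiv-v$ means $b\equiv 0$. In Type B, the further possibilities $u\equiv \pm v+N/2$ mean $2a\equiv N/2$ or $2b\equiv N/2 \pmod N$, i.e.\ $a$ or $b\equiv N/4$. All of these are excluded by admissibility. Hence $F(2a)=F(2b)$, i.e.\ $G(a)=G(b)$.

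The only delicate point is the parity and lifting bookkeeping in the even case. One must check that the choice of lift of $a,b$ from $\Z/(N/2)\Z$ to $\Z/N\Z$ does not matter, and that the freedom $t\in\{0,N/2\}$ is exactly what removes the parity condition when $N\equiv2\pmod 4$. I would verify this carefully, but expect no real obstacle.
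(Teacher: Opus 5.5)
Your proof is correct and is essentially the paper's argument. It uses the same units $u=(a+b)/2$, $v=(b-a)/2$ for odd $N$, and $u=a+b$, $v=b-a$ for even $N$, where shifting a lift by $N/2$ fixes the parity when $N\equiv2\pmod 4$; admissibility via Theorem~\ref{thm:degree} then excludes $u\equiv\pm v$, and also $u\equiv\pm v+N/2$ in Type B. Two harmless slips: with your choice $u-v\equiv a$ rather than $-a$, and parity mod $N/2$ is well defined simply because $N/2$ is even when $4\mid N$, not because of the opposite-parity hypothesis.
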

\begin{proof}
For odd $N$, take $u=(a+b)/2$ and $v=(b-a)/2$ modulo $N$.
Condition~\eqref{eq:comparison-condition} makes both $u,v$ units.
They satisfy $u-v=a$ and $u+v=b$.  Since $a,b\ne0$, one has
$u\ne\pm v$, so \eqref{eq:F-equality} gives $G(a)=G(b)$.

For even $N$, choose lifts of $a,b$ modulo $N$ and set
$u=a+b$, $v=b-a$.  Then $u-v=2a$ and $u+v=2b$.
If $N\equiv2\pmod4$, adding $N/2$ to one lift changes its parity,
so the lifts may be chosen with opposite parity.  If $4\mid N$,
this is the stated parity hypothesis.  Thus $u,v$ are odd, and
\eqref{eq:comparison-condition} makes them units at every odd prime.
The coincidences $u\equiv\pm v$ would give $a=0$ or $b=0$ modulo
$N/2$.  In Type B the additional coincidences
$u\equiv\pm v+N/2$ would give $a=N/4$ or $b=N/4$.
Admissibility and Theorem~\ref{thm:degree} exclude precisely these
cases.  Equation~\eqref{eq:F-equality} again gives $G(a)=G(b)$.

\end{proof}

\begin{lemma}\label{lem:constancy} We can compute the function $G$ as follows:
\begin{enumerate}[label=\textup{(\roman*)},leftmargin=*]
\item In Type A, $G(a)=C$ for every $a\ne0$ if $12\nmid N$.
If $12\mid N$, then $G(a)=C+Ef(a)$ for every $a\ne0$.
\item In Type B, for every $a\ne0,N/4$,
\[
 G(a)=
 \begin{cases}
  0,&3\nmid N\ \text{or}\ 24\mid N,\\
  Ef(a),&N\equiv12\pmod{24}.
 \end{cases}
\]
\end{enumerate}
Here $C,E$ are real constants; in the cases $12\mid N$, the function
$f$ is well-defined modulo $N/2$ by
\begin{equation*}
 f(a)=
 \begin{cases}
  1,&a\equiv0,1,5\pmod6,\\
 -1,&a\equiv2,3,4\pmod6.
 \end{cases}
\end{equation*}
The value $G(0)=\Lambda$ is treated separately.
\end{lemma}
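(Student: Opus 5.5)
We plan to analyze the "comparison graph" on the admissible classes: vertices are the admissible $a$ (nonzero in Type A; nonzero and $\ne N/4$ in Type B), with an edge $a$--$b$ whenever Lemma~\ref{lem:comparison} applies. Then $G$ is constant on each connected component, and the statement reduces to identifying these components. In Type B we additionally use the antiperiodicity \eqref{eq:B-antiperiod}, $G(a+N/4)=-G(a)$, to relate components.

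\textbf{Type A, $12\nmid N$.} The edge condition asks that $b$ avoid the two classes $\pm a$ modulo each odd prime $p\mid N$, plus a parity condition when $4\mid N$. We would show that any two admissible $a,b$ are joined by a path of length at most two. For an intermediate $c$ we need $c\not\equiv\pm a,\pm b \pmod p$ for every odd $p\mid N$, which excludes at most four residues modulo $p$. This is possible for $p\ge5$. For $p=3$ it is possible when $a\equiv\pm b\pmod 3$, and the excluded set may be all of $\Z/3$ otherwise, namely when $\{a,b\}\bmod 3=\{0,\pm1\}$. By CRT, $c$ can be chosen prime by prime. When $4\mid N$, the parity constraint becomes relevant. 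If $3\nmid N$, we can choose $c$ of parity opposite to both $a$ and $b$ when they share a parity, or take a path of length three otherwise. If $3\mid N$ but $4\nmid N$, there is no parity constraint, and we must handle $a\equiv0$, $b\equiv\pm1\pmod 3$. This can be done by a two-step path through some $c\equiv\mp1$ chosen so that each step avoids the bad classes. The key point is that modulo $3$, any two classes $a\not\equiv\pm b$ are already directly adjacent. Extra care is needed when $c=0$ or $c=N/4$ would be forced; the plentiful freedom at other primes handles this.

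\textbf{Type A, $12\mid N$.} Here the two constraints interact. Modulo $3$, adjacency requires $a,b$ to be $\{0\}$ versus $\{\pm1\}$, or $\pm1$ versus each other with $b\equiv\mp a$. Combined with the opposite-parity requirement, this forces a bipartition, and we would compute it in terms of $a\bmod 6$. The classes $\{0,1,5\}$ and $\{2,3,4\}$ should form the two sides of the comparison graph restricted to residues mod $6$. An edge then connects $0$ with $3$, $1$ with $4$ or $2$, and so on, and we would check that $f$ is constant along edges in the appropriate sense. This yields exactly two components, on which $G$ takes the values $C\pm E$, giving $G=C+Ef$. Well-definedness of $f$ modulo $N/2$ follows from $6\mid N/2$.

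\textbf{Type B.} Running the same connectivity argument with the admissible set excluding $N/4$ gives $G$ constant, or of the form $C+Ef$ when $12\mid N$ with the appropriate parity structure. We then impose $G(a+N/4)=-G(a)$. If $G$ is constant, this forces $C=-C$, hence $G=0$ off $\{0,N/4\}$. When $12\mid N$, $N/4$ is divisible by $3$. If $24\mid N$, then $N/4\equiv0\pmod 6$, so $f(a+N/4)=f(a)$ and antiperiodicity forces $C+Ef=0$ identically on classes where $f$ takes both values, giving $G=0$. If $N\equiv12\pmod{24}$, then $N/4\equiv3\pmod 6$ and $f(a+N/4)=-f(a)$, which kills only $C$ and leaves $Ef$.

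The main obstacle is the careful connectivity bookkeeping at the prime $3$ combined with the $2$-adic parity condition. In particular, we must verify that the forbidden points $0$ and $N/4$ never disconnect the graph, especially for small $N$. We would first try an explicit length-$\le3$ path construction via CRT, and treat small exceptional $N$ by hand.
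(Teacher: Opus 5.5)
Your overall strategy is the same as the paper's. You read Lemma~\ref{lem:comparison} as defining a comparison graph, build intermediate vertices by CRT, and use \eqref{eq:B-antiperiod} at the end in Type B, and that last antiperiodicity step is correct. The gap is at the prime $3$. The condition $b\not\equiv\pm a\pmod 3$ holds exactly when one of $a,b$ is divisible by $3$ and the other is not. In particular, $a\equiv1$ and $b\equiv-1\pmod3$ are \emph{not} comparable, since $b\equiv -a$.

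Your $12\mid N$ paragraph asserts the opposite (``$\pm1$ versus each other with $b\equiv\mp a$''), and the resulting picture is wrong. The edges you list ($0$--$3$, $1$--$4$, $1$--$2$ modulo $6$) all fail the mod-$3$ condition. Moreover, if edges really joined $\{0,1,5\}$ to $\{2,3,4\}$, then $G$ would be constant on a connected graph, contradicting your own conclusion that $G$ takes two values $C\pm E$. With opposite parity and the correct mod-$3$ rule, the only possible edges at the level of residues mod $6$ are $0$--$1$, $0$--$5$, $3$--$2$ and $3$--$4$. So $\{0,1,5\}$ and $\{2,3,4\}$ are not the two sides of a bipartition; they are the two candidate components. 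What must be proved is connectivity inside each of them. The paper does this in two moves: it joins two multiples of $3$ of the same parity through some $c\equiv1\pmod 3$ of the opposite parity, and it joins every nonmultiple of $3$ to some multiple of $3$ of the opposite parity. This same step also underlies the Type B case $N\equiv12\pmod{24}$.

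The same misreading affects your $3\mid N$, $4\nmid N$ case. A path $a$--$c$--$b$ with $a\equiv0$, $b\equiv\pm1$, $c\equiv\mp1\pmod 3$ fails on the edge $c$--$b$, because $c\equiv -b$. In fact no path of length two between such $a,b$ exists: any middle vertex would have to avoid $0$ and $\pm1$ modulo $3$. So your opening claim of paths of length at most two is false when $3\mid N$; for example $N=45$, $a=3$, $b=13$ are not directly comparable because $b\equiv a\pmod5$. One needs length three: go from $b$ to a nonzero multiple of $3$, then to $a$ through a vertex $\equiv1\pmod3$.

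Finally, the ``plentiful freedom at other primes'' you invoke to avoid $c=0$ or $c=N/4$ is unavailable when $N$ has no prime factor $\ge5$, or only the prime $5$. The paper handles these cases with explicit choices: $c=3$ or $6$ using $N\ge24$ (Type A) or $N\ge36$ (Type B), the lift $5$ modulo $5^e$ with $e\ge2$, and $c=2$ for powers of two. Your proposal leaves these choices unspecified.
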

\begin{proof}
At every odd prime $q\ge5$ dividing $N$, there is a residue $c_q\pmod q$
avoiding $\pm a,\pm b$, since at most four residues are forbidden.
For comparison with only $a$, at most two residues are forbidden. Lift
these choices modulo the corresponding prime powers. The Chinese
remainder theorem combines them with the required conditions at $2,3$.
If the resulting $c$ is admissible and satisfies the comparison condition
with both $a,b$, Lemma~\ref{lem:comparison} gives
\[
                         G(a)=G(c)=G(b).
\]
We give the choices ensuring this in each case.

\paragraph{\bf Type A, $4\nmid N$}
The domain of $G$ has odd order $N$ or $N/2$, at least seven because
$\varphi(N)=2g\ge6$. Suppose first that $3\nmid N$. If an odd prime
$q\ge7$ divides $N$, at least three choices for $c_q$ remain, and we can
choose a nonzero one. Then the CRT solution is nonzero. Otherwise
$N=5^e$ or $2\cdot5^e$, with $e\ge2$. If zero modulo $5$ is the only
allowed choice, it has a nonzero lift modulo $5^e$, for example $5$.
Thus any two nonzero residue classes have the same value of $G$.

If $3\mid N$, first let $a,b$ be nonzero multiples of $3$. Prescribe
$c\equiv1\pmod3$ and choose $c_q$ avoiding $\pm a,\pm b$ at every
other odd prime $q\mid N$. The resulting $c$ is nonzero, so all nonzero
multiples of $3$ have the same value. For $a\not\equiv0\pmod3$, prescribe
$c\equiv0\pmod3$ and avoid $\pm a$ at every other odd prime. If a prime
$q\ge5$ occurs, choose $c_q\ne0$ there as well. Otherwise $N=3^e$ or
$2\cdot3^e$, with $e\ge2$, and $c=3$ is nonzero in the domain of $G$.
In either case $G(a)=G(c)$. Hence $G$ is constant away from zero.

\paragraph{\bf Type A, $4\mid N$}
Suppose first that $3\nmid N$. For nonzero even $a,b$, choose
$c$ odd and $c_q$ avoiding $\pm a,\pm b$ at every odd prime $q\mid N$.
The odd parity makes $c$ nonzero modulo $N/2$, even if $c_5=0$ is forced.
Thus all nonzero even residue classes have the same value. For an odd
$a$, choose $c$ even and avoid $\pm a$ at every odd prime.
If an odd prime occurs, choose a nonzero allowed residue at one of them;
for a power of two, take $c=2$. Then $c\ne0$ and $G(a)=G(c)$, proving
constancy on all nonzero residue classes.

If $3\mid N$, first let $a,b$ be nonzero multiples of $3$ of the same
parity. Choose $c\equiv1\pmod3$ of the opposite parity, and avoid
$\pm a,\pm b$ at every prime $q\ge5$ dividing $N$. Thus $G(a)=G(c)=G(b)$.
For a nonmultiple $a$ of $3$, prescribe $c\equiv0\pmod3$ of opposite
parity and avoid $\pm a$ at every prime $q\ge5$ dividing $N$. A nonzero
allowed coordinate at one such prime keeps $c$ nonzero. If there is no
such prime, only $2,3$ divide $N$; since $N\ge24$, choose $c=3$ or $6$
according to the required parity. Both are nonzero modulo $N/2$.

It follows that $G$ has one value on
\[
 \{\text{even multiples of }3\}\ \cup\
 \{\text{odd nonmultiples of }3\}
       =\{a:a\equiv0,1,5\pmod6\}
\]
and one value on
\[
 \{\text{odd multiples of }3\}\ \cup\
 \{\text{even nonmultiples of }3\}
       =\{a:a\equiv2,3,4\pmod6\},
\]
with zero omitted. Writing these values as $C+E,C-E$ proves (i).

\paragraph{\bf Type B}
Here $4\mid N$ and $\varphi(N)=4g\ge12$, so $N\ge28$. We must also
avoid $N/4\pmod{N/2}$.

If $3\nmid N$ and $8\mid N$, then both excluded residue classes are even.
For two admissible even $a,b$, choose $c$ odd and avoid $\pm a,\pm b$
at every odd prime. It is automatically admissible. For an odd $a$,
choose $c$ even and avoid $\pm a$ at every odd prime. If such a prime
$q\ge5$ occurs, choose $c_q\ne0$ there; both excluded residue classes have
zero coordinate modulo $q$. If $N$ is a power of two, then $N\ge32$
and $c=2$ is admissible. Thus $G$ is constant on the admissible residue classes.

If $3\nmid N$ and $N\equiv4\pmod8$, then $N/4$ is odd. For two
admissible odd $a,b$, choose an even $c$ avoiding $\pm a,\pm b$ at every
odd prime. Since $N/4\ge7$ and $3\nmid N$, its odd prime factors include
a prime at least seven, or $N/4=5^e$ with $e\ge2$. As in the odd-order
case, a nonzero local coordinate or a nonzero lift modulo $5^e$ ensures
$c\ne0$. Its parity excludes $N/4$. Every admissible even $a$ can then
be compared with an odd $c$ avoiding $\pm a$ at all odd primes, and
having a nonzero coordinate at one of them. This excludes $N/4$ as well.
Again $G$ is constant on the admissible residue classes.

Finally suppose $3\mid N$. Both excluded residue classes are multiples of
$3$. For admissible multiples $a,b$ of $3$ of the same parity, choose
$c\equiv1\pmod3$ of opposite parity and avoid $\pm a,\pm b$ at every
prime $q\ge5$ dividing $N$. Such $c$ is automatically admissible. For
$a\not\equiv0\pmod3$, choose $c\equiv0\pmod3$ of opposite parity,
avoiding $\pm a$ at every prime $q\ge5$ dividing $N$. A nonzero allowed
coordinate at any such prime excludes both $0,N/4$. If no such prime
exists, only $2,3$ divide $N$; here $N\ge36$, so either $c=3$ or $6$
is admissible, according to the required parity. Thus $G(a)=C+Ef(a)$
on the admissible residue classes, with the same two parity classes as above.

It remains to apply \eqref{eq:B-antiperiod}. Translation by $N/4$
permutes the admissible residue classes. If $3\nmid N$, the single constant
equals its negative, so it is zero. If $24\mid N$, then
$f(a+N/4)=f(a)$, so each of the two constants equals its negative and
$C=E=0$. If $N\equiv12\pmod{24}$, then $f(a+N/4)=-f(a)$, so $C=0$.
This proves (ii).
\end{proof}

\subsection{Support bounds from Fourier inversion}

Now the Fourier inversion gives simple formulae for the coefficients $\lambda_{\ell}$. Suppose that $G(a)$ is constant for nonzero $a$. Then 

\[
 0=\lambda_0=
 \begin{cases}
 (\Lambda+(N-1)C)/N,&N\text{ odd},\\
 (2/N)(\Lambda+(N/2-1)C),&N\text{ even}.
 \end{cases}
\]
and 
\begin{equation}\label{eq:uniform-positive}
 \lambda_\ell=
 \begin{cases}
 \Lambda/(N-1)>0,&N\text{ odd},\\
 \Lambda/(N/2-1)>0,&N\text{ even},
 \end{cases}
 \qquad\ell\ne0.
\end{equation}

Equality in the support count gives one node pair per supported inversion
class. A nonfixed class has coefficient $D_i/2$ at each of its two
positions, whereas a fixed class has coefficient $D_i$ at one position.
Thus uniform coefficients give equal $D_i$ on the nonfixed classes and
half that weight on a fixed class.

If $12\mid N$, we can write
\[
 f(a)=\frac43\cos(\pi a/3)-\frac13(-1)^a.
\]
The inverse Fourier transform of $f$ on $\Z/(N/2)\Z$ is
\[
 \frac2N\sum_{a\bmod N/2}f(a)e^{-4\pi i a\ell/N}
 =\frac23(\delta_{\ell,N/12}+\delta_{\ell,5N/12})
      -\frac13\delta_{\ell,N/4}.
\]
The positions $\{N/12,5N/12\}$ and $\{N/4\}$ form two inversion
classes modulo $N/2$. In the exceptional cases below, every possibly
nonzero coefficient has value $t$, $t+2E/3$, or $t-E/3$, with the latter
two values at these two classes. If there is a nonexceptional position,
nonnegativity gives $t\ge0$. If $t=0$, nonnegativity at $N/12$ and $N/4$
forces both $E\ge0$ and $E\le0$. This would make all coefficients zero,
contrary to $\Lambda>0$. Thus $t>0$, and every nonexceptional inversion
class occurs. The two exceptional classes cannot both disappear: that
would require $t+2E/3=t-E/3=0$, contradicting $t>0$.

\begin{proposition}\label{prop:A-reduction}
In Type A, the only possibilities are $N=p$, $N=2p$ with
$p=2g+1$ odd prime, and $N=2^k$. If $N=p$, the node preimages are
exactly the nontrivial $p$-th roots of unity. In both prime cases,
all $D_i$ are equal. If $N=2p$ or $N=2^k$, there is exactly one
node pair in each distinct set
$\{\zeta_N^a,\zeta_N^{-a},-\zeta_N^{a},-\zeta_N^{-a}\}$ with
$a\not\equiv0\pmod{N/2}$. For $N=2^k$, the $D_i$ are equal except
that the pair $\{i,-i\}$ has half the common weight.
\end{proposition}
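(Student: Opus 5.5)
In Type A we have $K=L^+$, so $\varphi(N)=2g$. The plan is to combine Lemma~\ref{lem:constancy}(i) with the support bound, namely that the number of inversion classes $\{\ell:\lambda_\ell>0\}/\{\pm1\}$ is at most $g$. We compare this with the number of nonzero inversion classes that the Fourier inversion forces to be supported.

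\textbf{Case $12\nmid N$.} Here $G$ is constant away from $0$, so \eqref{eq:uniform-positive} gives $\lambda_\ell>0$ for every $\ell\neq 0$. This holds in $\Z/N\Z$ when $N$ is odd and in $\Z/(N/2)\Z$ when $N$ is even.

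Write $M=N$ or $N/2$. The number of nonzero inversion classes modulo $M$ is $(M-1)/2$ when $M$ is odd and $M/2$ when $M$ is even. The support bound therefore gives $(M-1)/2\le g=\varphi(N)/2$ or $M/2\le\varphi(N)/2$.

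\emph{$N$ odd.} The bound reads $N-1\le\varphi(N)$, which forces $N$ prime with $N-1=2g$.

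\emph{$N\equiv2\pmod 4$.} Then $M=N/2$ is odd and $\varphi(N)=\varphi(M)$, so $M-1\le\varphi(M)$ forces $M=p$ prime. Hence $N=2p$ with $p-1=2g$.

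\emph{$4\mid N$.} Then $M=N/2$ and $\varphi(N)=2\varphi(M)$ when $M$ is even. The bound becomes $M/2\le\varphi(M)$, i.e.\ $\varphi(M)/M\ge 1/2$. Since $2\mid M$ contributes a factor $1/2$, this forces $M$ to be a power of $2$, and then equality holds.

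In all three cases the bound is an equality. So exactly one node pair lies in each supported inversion class, and the remark after \eqref{eq:uniform-positive} then gives the weights. Equal coefficients force equal $D_i$ on nonfixed classes and half weight on a fixed class. For odd $M$ there are no fixed nonzero classes. For $M=2^{k-1}$ the only fixed nonzero class is $\ell=M/2=N/4$, which corresponds to $\{\zeta_N^{N/4},-\zeta_N^{N/4}\}=\{i,-i\}$.

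For $N=p$ the classes $\{\zeta_p^{\pm\ell}\}$, $\ell\neq0$, are all filled, so the node preimages are exactly the nontrivial $p$-th roots. For $N=2p$ and $N=2^k$, a class $\lambda_\ell$ records one pair chosen from $\{\pm\zeta_N^{\pm\ell}\}$, which is the stated description.

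\textbf{Case $12\mid N$.} This is the step I expect to be the main obstacle. Here $G=C+Ef$ away from $0$. By the discussion preceding the proposition, every nonexceptional class is supported with $t>0$, and at least one of the two exceptional classes ($\{N/12,5N/12\}$ and $\{N/4\}$) is supported.

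The number of nonzero classes modulo $M=N/2$ is $M/2=N/4$, so the support is at least $N/4-1$. It must be at most $g=\varphi(N)/2$, so we need $N/4-1\le\varphi(N)/2$, i.e.\ $\varphi(N)\ge N/2-2$. But $2\cdot3\mid N$ gives $\varphi(N)\le N/3$, and $N/3\ge N/2-2$ forces $N\le12$. For $N=12$ we have $\varphi=4<6\le 2g$, contradicting $g\ge3$. So this case is impossible.

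The care needed is in checking the counting of inversion classes and the edge cases that sit exactly at equality.
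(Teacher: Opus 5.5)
Your proposal is correct and follows the paper's proof essentially step for step. It uses the same case split from Lemma~\ref{lem:constancy}(i) and the same comparison of the forced number of supported inversion classes with $g=\varphi(N)/2$; your $\varphi(M)/M\ge 1/2$ formulation for $4\mid N$ is just the paper's $g\le N/4$. It also reaches the same bound $N/4-1\le g\le N/6$ when $12\mid N$. The only thing you leave implicit is the Fourier-inversion formula $\lambda_\ell=t+\frac{2E}{3}(\delta_{\ell,N/12}+\delta_{\ell,5N/12})-\frac{E}{3}\delta_{\ell,N/4}$ for $\ell\ne0$, with $t=-C$ coming from $\lambda_0=0$, which the paper writes out before invoking the preceding observation.
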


\begin{proof}
If $N$ is odd, then $G$ is constant except for $a=0$ by Lemma~\ref{lem:constancy}. So \eqref{eq:uniform-positive} holds and there are exactly $N-1$ nonzero $\lambda_{\ell}$. So we have $(N-1)/2\le g$. Hence $\varphi(N)=2g \ge N-1$ and $N=2g+1$ is prime. 
The $2g=p-1$ distinct node preimages therefore exhaust the nontrivial
$p$-th roots, which are all primitive.

If $N\equiv2\pmod4$, the same argument on $\ell\in \Z/(N/2)\Z$ gives
\[
              g\ge\frac{N/2-1}{2},\qquad
                          \varphi(N)=\varphi(N/2),
\]
so $p=N/2=2g+1$ is prime. Equality in the support count gives exactly
one node pair above each nonzero inversion class
$\{\ell,-\ell\}\subset\Z/p\Z$. These classes are nonfixed, so the
weight observation above and \eqref{eq:uniform-positive} give
$D_i=2\Lambda/(p-1)$ for every $i$. The same calculation applies to
the already established case $N=p$.

Suppose $4\mid N$ and $3\nmid N$. Then $G$ is constant except for
$a=0$ by Lemma~\ref{lem:constancy}. There are $N/4$ nonzero inversion
classes modulo $N/2$, including the fixed class $N/4$ corresponding
to $\{i,-i\}$. Thus we have
\[
                        g\ge N/4,\qquad
                     g=\varphi(N)/2\le N/4.
\]
Equality holds, so $N=4g$ is a power of two and exactly one node pair
lies above each nonzero inversion class modulo $N/2$. The same weight
observation gives half the common weight to the fixed pair $\{i,-i\}$.

Finally assume that $12\mid N$. By Lemma~\ref{lem:constancy},
$G(a)=C+Ef(a)$ at nonzero residue classes. Using the transform of $f$
computed above and accounting separately for $G(0)=\Lambda$, Fourier inversion gives,
for $\ell\ne0$,
\begin{equation*}
 \lambda_\ell=t+\frac{2E}{3}
           (\delta_{\ell,N/12}+\delta_{\ell,5N/12})
                                        -\frac E3\delta_{\ell,N/4}.
\end{equation*}
Here $t=2(\Lambda-C-E)/N=-C$, where the last equality follows from
$\lambda_0=0$.

There are nonexceptional positions, since $N\ge24$. The observation
above gives $t>0$ and at most one absent exceptional inversion class.
Thus at least $N/4-1$ of the $N/4$ nonzero inversion classes are
supported, giving
\[
                       g\ge N/4-1,
                     \qquad g=\varphi(N)/2\le N/6.
\]
These inequalities force $N\le12$, contrary to $12\mid N$ and $g\ge3$.
\end{proof}

\begin{proposition}\label{prop:B-reduction}
In Type B, the only remaining possibility is $N=2^k$. In that case
$g=N/8$, all $D_i$ are equal, and there is exactly one pair of nodes from each set $\{\zeta_N^a,-\zeta_N^a, \zeta_N^{-a},-\zeta_N^{-a} \}$ with odd $a$.
\end{proposition}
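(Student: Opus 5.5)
The plan is to run the argument of Proposition~\ref{prop:A-reduction} with the Type B values of $G$ from Lemma~\ref{lem:constancy}(ii). In Type B, Theorem~\ref{thm:degree} gives $4\mid N$ and $\varphi(N)=4g$, so $g=\varphi(N)/4$. Every $n_i$ is odd, so $\lambda_\ell=0$ for even $\ell$, and the supported positions lie among the odd residues modulo $N/2$. We split into three cases: $3\nmid N$, $24\mid N$, and $N\equiv12\pmod{24}$.

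\emph{Case $3\nmid N$ or $24\mid N$.} Here $G(a)=0$ for every $a\neq0,N/4$. The two excluded values are $G(0)=\Lambda$ and, by \eqref{eq:B-antiperiod}, $G(N/4)=-\Lambda$. Fourier inversion on $\Z/(N/2)\Z$ then gives
\[
 \lambda_\ell=\frac2N\bigl(\Lambda-\Lambda e^{-\pi i\ell}\bigr)=\frac2N\Lambda\bigl(1-(-1)^\ell\bigr),
\]
which equals $4\Lambda/N$ for odd $\ell$ and $0$ for even $\ell$. So every odd position modulo $N/2$ is supported. The fixed classes $\ell\equiv-\ell$ modulo $N/2$ are $0$ and $N/4$, and $N/4$ is even here because $8\mid N$ when $24\mid N$ (when $3\nmid N$ with $N\equiv4\pmod 8$, the odd class $N/4$ is fixed and needs separate bookkeeping). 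Up to that bookkeeping, the odd positions form about $N/8$ inversion classes, each carrying a node pair, so $g\ge N/8$. Combined with $g=\varphi(N)/4$ this gives $\varphi(N)\ge N/2$, hence $\varphi(N)=N/2$ and $N=2^k$. With $4\mid N$ and $N\ge28$, $N$ is a power of two with $8\mid N$, so $N/4$ is even and no odd class is fixed. Equality $g=N/8$ in the support count gives exactly one node pair over each odd inversion class. Uniform $\lambda_\ell$ on nonfixed classes forces all $D_i$ equal. Unwinding $\lambda_\ell=\mu_\ell+\mu_{\ell+N/2}$ gives exactly one pair from each set $\{\pm\zeta_N^{\pm a}\}$ with $a$ odd.

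In the subcase $N\equiv4\pmod8$ with $3\nmid N$, the odd residues modulo $N/2$ give $(N/4+1)/2$ inversion classes, including the fixed class $N/4$. This yields $g\ge(N/4+1)/2>N/8$, so $\varphi(N)>N/2$, which is impossible for even $N$.

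\emph{Case $N\equiv12\pmod{24}$.} Here $G=Ef$ off $\{0,N/4\}$. We use the computed transform of $f$, adding corrections at $a=0$ (value $\Lambda$ in place of $Ef(0)=E$) and at $a=N/4$ (value $-\Lambda$ in place of $Ef(N/4)=-E$, since $N/4\equiv3\pmod 6$). This gives
\[
 \lambda_\ell=\frac2N(\Lambda-E)\bigl(1-(-1)^\ell\bigr)+\frac{2E}3\bigl(\delta_{N/12}+\delta_{5N/12}\bigr)-\frac E3\delta_{N/4}.
\]
Now $N/12$ and $N/4$ are odd. The nonnegativity argument from the paragraph before Proposition~\ref{prop:A-reduction} applies, with $t=4(\Lambda-E)/N$ and odd nonexceptional positions, which exist since $N\ge36$. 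It shows that all but at most one odd inversion class is supported. That gives $g\gtrsim N/8-1$, while $g=\varphi(N)/4\le N/12$. These are incompatible once $N\ge36$, giving a contradiction.

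\textbf{Main obstacle.} The hard part is the careful class counting: tracking fixed classes and which exceptional positions are odd, and handling the $t=0$ possibility in the $12\mid N$ case. I expect each to reduce to the same nonnegativity argument already used for Type A.
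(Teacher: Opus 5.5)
Your proposal is correct and follows the paper's proof essentially step for step: the same Fourier inversion formulas $\lambda_\ell=\frac{2\Lambda}{N}(1-(-1)^\ell)$ and $\lambda_\ell=t+\frac{2E}{3}(\delta_{\ell,N/12}+\delta_{\ell,5N/12})-\frac{E}{3}\delta_{\ell,N/4}$ with $t=4(\Lambda-E)/N$, the same odd-class counting including the fixed class $N/4$ when $N\equiv4\pmod 8$, and the same nonnegativity argument. The differences are cosmetic and both still close the argument. You absorb the case $24\mid N$ into the bound $\varphi(N)\ge N/2$, which forces $N=2^k$ and contradicts $3\mid N$, instead of comparing with $g\le N/12$. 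In the last case you use the slightly weaker bound $g\ge N/8-1$, which still gives $N\le 24$, contradicting $N\ge36$.
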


\begin{proof}
Here $4\mid N$ and $g\ge3$ implies $N\ge 28$. If $3\nmid N$, by Lemma~\ref{lem:constancy}, $G(a)=0$ for any admissible
$a\ne0,N/4$. The remaining values are $G(0)=\Lambda$ and, by
\eqref{eq:B-antiperiod}, $G(N/4)=-\Lambda$.  The Fourier inversion gives
\begin{equation}\label{eq:uniform-odd}
                  \lambda_\ell=\frac{2\Lambda}{N}
                                            (1-(-1)^\ell).
\end{equation} This is non-vanishing for odd $\ell$. 

If $8\mid N$, this has $N/4$ positive coefficients, grouped into
$N/8$ inversion classes. Thus
$g=\varphi(N)/4\ge N/8$, possible only when the equalities hold and $N=2^k$. 
The equality implies that each pair of nodes comes from one odd
inversion class. There is no fixed class, so the weights $D_i$ are equal.

If $N\equiv4\pmod8$, the odd class $N/4$ is fixed by inversion,
so the number of odd inversion classes is $(N+4)/8$, and we have $g=\varphi(N)/4\ge(N+4)/8>N/8$,  impossible because $\varphi(N)\le N/2$. 

Suppose $3\mid N$.  If $24\mid N$, then Lemma~\ref{lem:constancy}
again gives $G(a)=0$ for admissible $a\ne0,N/4$. The Formula~\eqref{eq:uniform-odd}
applies and gives $g\ge N/8$, contradiction to $g=\varphi(N)/4\le N/12$.

The remaining case is $N\equiv12\pmod{24}$, with $N\ge36$. By Lemma~\ref{lem:constancy}, $G(a)=Ef(a)$ for admissible $a\ne0,N/4$. Fourier inversion gives $\lambda_\ell=0$ for even $\ell$ and
\begin{equation*}
 \lambda_\ell=t+\frac{2E}{3}
       (\delta_{\ell,N/12}+\delta_{\ell,5N/12})
                -\frac E3\delta_{\ell,N/4},
       \qquad t=\frac{4(\Lambda-E)}N
\end{equation*} for odd $\ell$. 

There exist odd positions away from $N/12,5N/12$ and $N/4$. The same
observation gives $t>0$ and at most one absent exceptional inversion
class. Hence
\[
                    g\ge\frac{N+4}{8}-1=\frac{N-4}{8},
                       \qquad g=\varphi(N)/4\le N/12.
\]
This implies $N\le12$, a contradiction to $g\ge 3$. 
\end{proof}

\subsection{Determination of the node pairs}

For even $N$, the coefficients $\lambda_\ell$ determine only the sums
$\mu_\ell+\mu_{\ell+N/2}$. In order to determine which pair among $\{\zeta_N^{\pm \ell}, -\zeta_N^{\pm \ell}\}$ are actually chosen, we need to compute $\mu_\ell$. We now use (3) of Lemma~\ref{lem:top-duality},
$c_i=B_0D_i(x_i^s-x_i^{-s})$ for a unit $s$, which is odd. Substitution
into \eqref{eq:moments} and the product-to-sum identity give
\begin{equation}\label{eq:fourier-moments}
 F(s+r)=F(s-r),\qquad 1\le r\le g-1.
\end{equation}
Since the $D_i$ are rational, for every unit $u$ and every residue class $a$,
\[
 \tau_u(F(a))=F(ua),\qquad F(-a)=F(a).
\]
Since $x_i\ne\pm1$, we have $\mu_0=\mu_{N/2}=0$. The finite geometric
sum over odd residue classes gives
\[
 \sum_{\substack{a\bmod N\\a\text{ odd}}}F(a)
 =\sum_{\ell\bmod N}\mu_\ell
       \sum_{\substack{a\bmod N\\a\text{ odd}}}\zeta_N^{a\ell}
 =\frac N2(\mu_0-\mu_{N/2})=0.
\]
Indeed, the inner sum is zero unless $\ell=0,N/2$, where the sum is
$N/2,-N/2$, respectively. Fourier inversion also gives
\[
 \begin{aligned}
 \mu_\ell-\mu_{\ell+N/2}
 &=\frac1N\sum_{a\bmod N}F(a)\zeta_N^{-a\ell}(1-(-1)^a)\\
 &=\frac2N\sum_{\substack{a\bmod N\\a\text{ odd}}}
                       F(a)\zeta_N^{-a\ell}.
 \end{aligned}
\]

\begin{proposition}\label{prop:no-powers-two}
Neither Type A nor Type B with $N=2^k$ satisfies all the cusp conditions.
\end{proposition}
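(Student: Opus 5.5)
The plan is to show that, when $N=2^k$, the moment relations \eqref{eq:fourier-moments} force $F$ to vanish on every odd residue class. Then $\mu_\ell=\mu_{\ell+N/2}$ for every $\ell$. This contradicts the placement of node pairs given by Propositions~\ref{prop:A-reduction} and~\ref{prop:B-reduction}, which says that exactly one node pair lies in each set $\{\pm\zeta_N^{a},\pm\zeta_N^{-a}\}$.

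\textbf{Step 1: one moment relation and its Galois conjugates.} Since $g\ge3$, I may take $r=2$ in \eqref{eq:fourier-moments}. With $s$ the odd unit from Lemma~\ref{lem:top-duality}(3), this gives $F(s+2)=F(s-2)$. Both $s\pm2$ are odd, hence units modulo $2^k$. Applying $\tau_u$ for every unit $u$ and using $\tau_u(F(a))=F(ua)$ gives
\[
 F\bigl(u(s+2)\bigr)=F\bigl(u(s-2)\bigr)\qquad\text{for all }u\in(\Z/N\Z)^\times .
\]
Put $w=(s-2)(s+2)^{-1}$. Then $F(a)=F(wa)$ for every odd $a$. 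Combined with $F(-a)=F(a)$, this shows $F$ is invariant on odd classes under the subgroup $\langle w,-1\rangle\subset(\Z/2^k\Z)^\times$.

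\textbf{Step 2: $w$ generates.} The group $(\Z/2^k\Z)^\times$ equals $\{\pm1\}\times\langle5\rangle$. Hence $\langle w,-1\rangle$ is the whole group as soon as $w\equiv\pm5$, that is $w\equiv3$ or $5\pmod 8$. Since $s$ is odd, $(s-2)(s+2)=s^2-4\equiv-3\equiv5\pmod8$. Also $(s-2)^2\equiv1\pmod8$. Writing $w=(s-2)^2/(s^2-4)$ gives $w\equiv5^{-1}\equiv5\pmod 8$. Therefore $F$ is constant on the odd residues. The geometric-sum identity $\sum_{a\text{ odd}}F(a)=0$ proved above then forces $F(a)=0$ for every odd $a$. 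The Fourier formula for $\mu_\ell-\mu_{\ell+N/2}$ then gives $\mu_\ell=\mu_{\ell+N/2}$ for all $\ell$.

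\textbf{Step 3: contradiction with the node placement.} By Proposition~\ref{prop:A-reduction} (Type A) and Proposition~\ref{prop:B-reduction} (Type B), each set $\{\zeta_N^{a},\zeta_N^{-a},-\zeta_N^{a},-\zeta_N^{-a}\}$ with $a$ odd (and, in Type A, any $a\not\equiv0\pmod{N/2}$) contains exactly one node pair. Take $a=1$; the set is nonfixed since $N\ge16$ forces $1+N/2\not\equiv\pm1$. Exactly one of $\zeta_N^{\pm1}$ or $-\zeta_N^{\pm1}$ is a node pair, and it carries weight $D_i>0$. So exactly one of $\mu_1,\mu_{1+N/2}$ is positive and the other is zero, contradicting $\mu_1=\mu_{1+N/2}$.

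The only delicate point is Step 2: it needs a relation with $r$ even, so that $s\pm r$ are both units, and with $r\le g-1$. This is where $g\ge3$ enters.
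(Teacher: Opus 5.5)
Your proposal is correct and follows essentially the same route as the paper: the $r=2$ moment relation, Galois conjugation to get invariance of $F$ on odd classes under a ratio $\equiv5\pmod 8$, then $F=0$ on odd classes and hence $\mu_\ell=\mu_{\ell+N/2}$. The differences are cosmetic. You use $w=h^{-1}$ and the standard structure $\{\pm1\}\times\langle5\rangle$ of $(\Z/2^k\Z)^\times$ where the paper uses a 2-adic valuation argument. At the end you test the class $a=1$ directly, where the paper takes a node of odd exponent and forces $x_i^2=-1$. Your unproved claim $N\ge16$ follows from $2^{k-1}=\varphi(N)\ge2g\ge6$.
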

\begin{proof}
Since $g\ge3$ and $2^{k-1}=\varphi(N)=[L:\Q]\ge2g\ge6$, we have $k\ge4$.
Since $g-1\ge2$, taking $r=2$ in \eqref{eq:fourier-moments} gives
$F(s+2)=F(s-2)$. Both $s+2$ and $s-2$ are units $\pmod N$.
Applying $\tau_{(s-2)^{-1}}$ and then $\tau_a$ for any odd $a$ gives
\[
 F(ah)=F(a),\qquad
 h=(s+2)(s-2)^{-1}=1+4(s-2)^{-1}\equiv5\pmod8.
\]
For an integer representative of $h$, we use induction to prove
\[
 v_2(h^{2^r}-1)=r+2,\qquad r\ge0.
\]
The base case $v_2(h-1)=2$ is trivial, and we have $h^{2^{r+1}}-1= (h^{2^r}+1)(h^{2^r}-1)$ with $h^{2^r}+1\equiv2\pmod4$. Thus $h$ has order $2^{k-2}$ $\pmod N$ and generates the units congruent to $1\pmod4$.
So $h,$ and $-1$ generates $(\Z/N\Z)^\times$. Since $F(a)=F(ah)=F(-a)$ for odd $a$, we have $F(a)=C$ on all odd residue classes $a$.

The odd-class sum above gives $(N/2)C=0$, so $F(a)=0$ for every odd $a$.
The difference formula therefore gives
\[
 \mu_\ell=\mu_{\ell+N/2}.
\]
There is a selected node $x_i=\zeta_N^{n_i}$ with $n_i$ odd, since $N$
is the actual least common multiple of the node orders. Then $x_i$ has
order $N$, and $\mu_{n_i}>0$ forces also $\mu_{n_i+N/2}>0$. Thus both
$x_i^{\pm}$ and $-x_i^{\pm}$ are node branches. So they have to be the same pair by Propositions~\ref{prop:A-reduction} and~\ref{prop:B-reduction}. This is possible only if $x_i^2=-1$ and $n_i\equiv N/4\pmod N$ with odd $n_i$, contradiction to $N\ge16$. 
\end{proof}

\begin{proposition}\label{prop:prime-signs}
Suppose $N=2p$, where $p=2g+1$ is prime. Then the node preimages
$\{x_i,x_i^{-1}:1\le i\le g\}$ are the complete set of primitive
$2p$-th roots.
\end{proposition}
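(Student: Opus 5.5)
The plan is to encode the choice made in each class by a sign and show the sign is forced. By Theorem~\ref{thm:degree} and Proposition~\ref{prop:B-reduction}, the case $N=2p$ is of Type~A. Proposition~\ref{prop:A-reduction} then says that all $D_i$ equal a common $D$, and that for each class $b\in(\Z/p\Z)^\times/\{\pm1\}$ exactly one of the pairs $\{\zeta_p^{\pm b}\}$, $\{-\zeta_p^{\pm b}\}$ is a node pair. Write the selected pair as $\{\varepsilon_b\zeta_p^{\pm b}\}$ with $\varepsilon_b\in\{\pm1\}$. The primitive $2p$-th roots are exactly $-\zeta_p^{b}$, so the claim is $\varepsilon_b=-1$ for every $b$. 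It suffices to show that all $\varepsilon_b$ are equal. Indeed, if all were $+1$, every node would be a $p$-th root, so $N=p$, contrary to $N=2p$.

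\textbf{Step 1 (odd-class function).} Restrict $F$ to odd classes $a\bmod 2p$. There $\zeta_{2p}^{a(\ell+p)}=-\zeta_{2p}^{a\ell}$, so $F(a)=\sum_\ell(\mu_\ell-\mu_{\ell+p})\zeta_{2p}^{a\ell}$. Since $a\mapsto a\bmod p$ is a bijection from odd classes to $\Z/p\Z$, this becomes $F(a)=\tfrac D2\sum_b\varepsilon_b(\zeta_p^{ab}+\zeta_p^{-ab})$ up to the harmless reindexing $b\mapsto 2^{-1}b$. Thus on odd units, $F(a)=D\sum_b\varepsilon_b\,\eta_{ab}$, where $\eta_c=\zeta_p^c+\zeta_p^{-c}$. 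The $g$ elements $\eta_1,\dots,\eta_g$ form a $\Q$-basis of $\Q(\zeta_p)^+$, and $\tau_u$ permutes them. Also, the value at the odd class $a\equiv p$ is $F(p)=D\sum_b\varepsilon_b$. Combined with the odd-class sum $\sum_{a\text{ odd}}F(a)=0$ established before Proposition~\ref{prop:no-powers-two}, this value will be used at the end.

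\textbf{Step 2 (invariance from the moments).} Use \eqref{eq:fourier-moments} only with even $r=2k$, $1\le k\le (g-1)/2$; the case $k=1$ is always available since $g\ge3$. Here $s\pm2k$ are odd, and they are units unless $s\equiv\pm2k\pmod p$. Comparing coefficients of the basis $\eta_c$ in $F(s+2k)=F(s-2k)$ gives $\varepsilon_{(s+2k)^{-1}c}=\varepsilon_{(s-2k)^{-1}c}$ for all $c$. Hence $\varepsilon$, viewed as a function on $(\Z/p\Z)^\times/\{\pm1\}$, is invariant under multiplication by $h_k=(s+2k)(s-2k)^{-1}$. If some $s\pm2k\equiv0\pmod p$, the corresponding side lies at the odd class $p$, and the equation instead yields a linear relation between $\sum_b\varepsilon_b$ and one conjugate of $\sum_b\varepsilon_b\eta_b$. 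Since the $\eta_b$ sum to $-1$, this forces $\varepsilon$ to be constant directly, and that case ends quickly.

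\textbf{Step 3 (main obstacle: generation).} The difficulty is that $\langle h_k,-1\rangle$ need not be all of $(\Z/p\Z)^\times$; for $g=3$ only $h_1$ is available. I would supplement the moment relations with the sign condition from Proposition~\ref{prop:MM}: the $c_i$ are positive. By Lemma~\ref{lem:top-duality}(3), $c_i=B_0D(x_i^s-x_i^{-s})$, so $\varepsilon_b^{s}\sin(2\pi sb/p)$, suitably normalized, has the same sign for every $b$. Since $s$ is odd, this pins down $\varepsilon_b$ as $\pm\operatorname{sign}\sin(2\pi sb/p)$ up to one global sign, so $\varepsilon$ is determined by $s$. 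I would then feed this explicit form back into the single relation $F(s+2)=F(s-2)$. The hope is that a direct trigonometric comparison shows the pattern $\operatorname{sign}\sin(2\pi s b/p)$ is $h_1$-invariant only if it is constant on the relevant classes, i.e.\ only if $s\equiv\pm1$ up to the symmetry $z\mapsto -z$, which is where the sine signs are uniform. If this direct route fails, the fallback is to use the moments for all $1\le r\le g-1$, including odd $r$, through the full identity $\omega_h^{\mathrm{norm}}=Rz^{g-1}\,dz/\prod(z-x_i)(z-x_i^{-1})$. Comparing its residues $x_i^{g-1}/\prod_{j}'(\cdots)$ with $D(x_i^s-x_i^{-s})$ should then force $\prod_i(z-x_i)(z-x_i^{-1})$ to equal $\Phi_{2p}(z)$ or $\Phi_p(z)$.

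\textbf{Step 4 (conclusion).} Once $\varepsilon$ is constant, all nodes are $p$-th roots or all are primitive $2p$-th roots. The former gives $N=p$, a contradiction, so every $x_i=-\zeta_p^{b}$ is a primitive $2p$-th root. Since there is exactly one pair per class, the $2g=p-1$ node preimages exhaust all $\varphi(2p)=p-1$ primitive $2p$-th roots.
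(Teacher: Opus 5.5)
\textbf{There is a genuine gap at Step~3.} Your Steps 1, 2 and 4 are sound, and Step~2 is a clean reformulation. On odd classes $F(a)=\tfrac D2\sum_b\varepsilon_b\eta_{ab}$ (a factor $\tfrac12$ off from what you wrote, which is harmless). Because the periods $\eta_c$ form a $\Q$-basis of $\Q(\zeta_p)^+$, the relation $F(s+2k)=F(s-2k)$ says exactly that $\varepsilon$ is invariant under $h_k$. Your treatment of the degenerate case $s\equiv\pm2k\pmod p$ via $\sum_c\eta_c=-1$ is also correct; it actually yields a contradiction.

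Step~3, however, carries the whole content of the proposition and is not proved. The route you propose cannot work:
\begin{enumerate}
\item Positivity of the $c_i$ carries no information about $\varepsilon$. Which member of the pair $\{\varepsilon_b\zeta_p^{\pm b}\}$ is labelled $x_i$ is itself decided by the sign of its residue. Swapping the two flips both $c_i$ and $x_i^s-x_i^{-s}$. So for every sign pattern $\varepsilon$ the orientations can be chosen to make all $c_i>0$.
\item $F$ is even in each $n_i$, so $F$, and hence $F(s+2)=F(s-2)$, does not see orientations at all.
\item That single relation only gives invariance under $\langle h_1\rangle$, which can be proper even for the correct $s$. For example, $p=43$, $s=21$ gives $h_1\equiv8$, of order $7$ in a group of order $21$.
\item The target $s\equiv\pm1\pmod p$ is excluded outright by the $r=1$ moment.
\item The fallback via residues of $Rz^{g-1}dz/\prod(z-x_i)(z-x_i^{-1})$ is not an argument.
\end{enumerate}

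\textbf{Missing idea: pin down $s \bmod p$ using the odd-$r$ moments you discarded.} For odd $r$, both $s\pm r$ are even classes, where $F$ is already determined: $F(0)=\Lambda>0$ and $F(2a)=-\Lambda/(p-1)<0$ for $a\not\equiv0\pmod p$ (Lemma~\ref{lem:constancy} together with $\lambda_0=0$). If $s\equiv\pm r\pmod p$ for some odd $r\le g-1$, one of $s\pm r$ is $0\bmod 2p$ and the other is a nonzero even class. Then \eqref{eq:fourier-moments} would read $\Lambda=-\Lambda/(p-1)$, which is impossible. Hence, outside your degenerate even case, $s\equiv\pm g\pmod p$, i.e.\ $2s\equiv\mp1$.

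Normalize $s=g$ for $g$ odd and $s=g+1$ for $g$ even, and let $r$ run over all even $|r|\le g-1$. Then $n=s+r$ runs over consecutive odd integers, and $s-r=2s-n\equiv-(n\pm1)$. So modulo $\pm1$ the available ratios are $n/(n+1)$ when $g$ is odd and $n/(n-1)$ when $g$ is even. These give $2\in J$, either directly as $1/2$ or as $(3/2)^2(9/8)^{-1}$ using $p\ge13$. Every $n\le p-1$ then follows by induction from $n=\tfrac{n}{n\pm1}\cdot2\cdot\tfrac{n\pm1}{2}$.

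This generation argument is exactly what the paper supplies. With it, your invariance statement makes $\varepsilon$ constant, and your Step~4 finishes the proof as you describe.
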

\begin{proof}
We identify $\Gal(K/\Q)=(\Z/2p\Z)^\times/\{\pm1\}\cong (\Z/p\Z)^\times/\{\pm1\}$ by reduction$\pmod p$, and choose
one representative for each inversion class. By Proposition~\ref{prop:A-reduction},
there is exactly one node pair from $\{\zeta_p^{\pm a},-\zeta_p^{\pm a}\}$.
After relabeling the pairs, we can write
\[
 x_a=\epsilon_a\zeta_p^a,\quad a\in \Gal(K/\Q),\qquad \epsilon_a\in\{1,-1\}.
\]
Reversing a pair also changes the sign of $c_a$, while $D_a$ and $F$
are unchanged. Let $t=\Lambda/(p-1)>0$. Proposition~\ref{prop:A-reduction}
and \eqref{eq:uniform-positive} give, for $\ell\not\equiv0,p\pmod{2p}$,
\[
 \mu_\ell+\mu_{\ell+p}=t>0,\qquad \mu_\ell\mu_{\ell+p}=0.
\]
Also $\mu_0=\mu_p=0$, and we have 
\[
 F(0)=\Lambda=(p-1)t,\qquad F(2a)=-t,\quad a\not\equiv0\pmod p.
\]

We first determine $s\pmod p$ that satisfies (3) of Lemma~\ref{lem:top-duality}. Suppose that $s\equiv\pm r\pmod p$
for some $1\le r\le g-1$. If $r$ is odd, one of $s+r,s-r$ is zero
modulo $2p$ and the other is a nonzero even residue class.
Equation~\eqref{eq:fourier-moments} would give $\Lambda=-t$, which is
impossible. If $r$ is even, one of $s+r,s-r$ is $p$ and the other
is a unit $u$, so $F(p)=F(u)$. Applying all $\tau_v$ gives
$F(a)=F(p)$ for every unit $a$, since $vp\equiv p\pmod{2p}$.
Thus all odd residue classes have the same value. Their sum is zero,
so all these values vanish. The difference formula above gives
$\mu_\ell=\mu_{\ell+p}$, contrary to
$\mu_\ell+\mu_{\ell+p}=t>0$ and $\mu_\ell\mu_{\ell+p}=0$.
Therefore $s\equiv\pm g\pmod p$. Replacing $s$ by $-s$ if necessary,
we may take
\[
 s=g\quad(g\text{ odd}),\qquad s=g+1\quad(g\text{ even}).
\]

For even $r$ with $2\le r\le g-1$, both $s+r$ and $s-r$ are units$\pmod {2p}$. Applying Galois automorphisms to
\eqref{eq:fourier-moments} gives
\[
 F\bigl(u(s+r)(s-r)^{-1}\bigr)=F(u)
 ,\qquad u\in(\Z/2p\Z)^\times.
\]
As in Proposition~\ref{prop:no-powers-two}, we want to prove $(s+r)(s-r)^{-1}$ generates $\Gal(K/\Q)$. Let $J\le \Gal(K/\Q)$ be the subgroup generated by $(s+r)(s-r)^{-1}$ for even $2\leq r\leq g-1$. In the following discussion, ratios are taken in $\Gal(K/\Q)=(\Z/p\Z)^\times/\{\pm1\}$.

If $g$ is odd, then $s=g$ and $2s=p-1$. For even $r$, the elements $n=s+r$ run through $1,3,\ldots,p-2$. Since
\[
 s-r=2s-n\equiv-(n+1)\pmod p,
\]
we obtain 
\[
 \frac12,\ \frac34,\ \frac56,\ldots,\frac{p-2}{p-1}\in J.
\]
In particular, $2\in J$.

If $g$ is even, then $s=g+1$ and $2s=p+1$. The allowed even $r$,
again with both signs and zero, give $n=s+r=3,5,\ldots,p-2$. Since
\[
 s-r=2s-n\equiv-(n-1)\pmod p,
\]
we obtain
\[
 \frac32,\ \frac54,\ \frac76,\ldots,\frac{p-2}{p-3}\in J.
\]
Here $p\ge13$. In particular, $3/2,9/8\in J$, and therefore
\[
 2=\left(\frac32\right)^2\left(\frac98\right)^{-1}\in J.
\]

In either case, we use induction on $n$ to prove all $1\le n\le p-1$ belongs to $J$. If $n$ is even, then $n=2(n/2)\in J$
by induction. If $n>1$ is odd, use the identity
\[
 n=\frac{n}{n+1}\,2\left(\frac{n+1}{2}\right),\quad g\text{ odd},
 \qquad
 n=\frac{n}{n-1}\,2\left(\frac{n-1}{2}\right), \quad g\text{ even}.
\]
The ratio and $2$ is in $J$, and the last factor is a positive integer smaller
than $n$. Hence $n\in J$ by induction. 

Now again consider $\Gal(K/\Q)=(\Z/2p\Z)^\times/\{\pm1\}$. Since $F(-u)=F(u)$ and by above discussion, we conclude that $F(u)=C$ for some constant $C$ on units. The only odd residue class that is not a unit is
$p$, so the odd-class sum gives $F(p)=-(p-1)C$.
For $\ell\not\equiv0,p\pmod{2p}$, summing over all odd residue classes
and removing the term at $p$ gives
\[
 \sum_{u\in(\Z/2p\Z)^\times}\zeta_{2p}^{-u\ell}=-(-1)^\ell.
\]
The difference formula therefore yields
\[
 \begin{aligned}
 \mu_\ell-\mu_{\ell+p}
 &=\frac1p\left(C\sum_{u\in(\Z/2p\Z)^\times}
          \zeta_{2p}^{-u\ell}+F(p)(-1)^\ell\right)\\
 &=-C(-1)^\ell.
 \end{aligned}
\]
Together with $\mu_\ell+\mu_{\ell+p}=t$, we have
\[
 \mu_\ell=\frac{t-C(-1)^\ell}{2},\qquad
 0=\mu_\ell\mu_{\ell+p}=\frac{t^2-C^2}{4}.
\]
Thus $C=\pm t$. If $C=-t$, only $\mu_\ell$ for nonzero even $\ell$ have positive coefficients, so every node has order $p$, contradiction to $N=\operatorname{lcm}\operatorname{ord}x_i=2p$.
Hence $C=t$, and the supported exponents are exactly the odd classes
other than $p$, namely the units modulo $2p$. Equivalently, all
$x_a=-\zeta_p^a$, and the node pairs form the complete set of
primitive $2p$-th roots of unity.
\end{proof}

\begin{proof}[Proof of Theorem~\ref{thm:cusp-rigidity}]
By Propositions~\ref{prop:A-reduction}, \ref{prop:B-reduction} and
\ref{prop:no-powers-two}, only Type A remains, with $p=2g+1$ prime
and $N=p$ or $2p$. For $N=p$, the node preimages are already all
primitive by Proposition~\ref{prop:A-reduction}. For $N=2p$, this follows
from Proposition~\ref{prop:prime-signs}. Thus in either case they are
exactly the $2g$ primitive $N$-th roots. The equality of the weights
$D_i$ follows from Proposition~\ref{prop:A-reduction}. Since $K=L^+$ and
$\Q(\zeta_{2p})=\Q(\zeta_p)$, we have $K=\Q(\zeta_p)^+$.
The product in Proposition~\ref{prop:MM} is $\Phi_N(z)$, so
\[
 \omega_h^{\mathrm{norm}}=R\frac{z^{g-1}}{\Phi_N(z)}\,dz.
\]
For $\phi(z)=-z$, the identity $\Phi_{2p}(z)=\Phi_p(-z)$ gives
\[
 \phi^*\omega_p=(-1)^g\frac{z^{g-1}}{\Phi_{2p}(z)}\,dz.
\]
Moreover, $\phi$ sends each primitive $p$-pair to a primitive $2p$-pair
and preserves inversion pairing. Thus the two presentations define
isomorphic projective stable differentials, represented by
\eqref{eq:polygon-cusp}.
\end{proof}

\section{Uniqueness of the Veech polygons}\label{sec:uniqueness}

In the preceding section, we proved uniqueness of the paired projective
stable differential at an irreducible cusp of an algebraically primitive Teichm\"uller curve. The regular $2p$-gon with opposite sides identified generates such a curve, with trace field $\Q(\zeta_p)^+$. Its horizontal direction through opposite vertices
is irreducible. By Theorem~\ref{thm:cusp-rigidity}, any arbitrary algebraically primitive Teichm\"uller curve has the same irreducible cusp. By \cite[Lemma~2.5]{Winsor}, there exists a generating differential $(X,\omega)$ of this curve with the same heights, circumferences of horizontal cylinders of the regular $2p$-gon. From the regular $2p$-gon, when $\theta=\pi/p$,
\[
c_i=\ell_{i-1}+\ell_{i},\qquad
h_i=\frac12\tan(\theta/2)c_i,
\quad1\le i\le g
\] where 
$\ell_i=2\cos(i\theta)$.

We label horizontal cylinder $C_i$ consecutively along the chain $C_1 - C_2 -\dots-C_g.$ Then there is a permutation 
$\pi\in \operatorname{Sym}_g$ such that, $c_{\pi(i)}, h_{\pi(i)}$ are their circumferences and heights.

In order to prove uniqueness of the curve, now we take care of the permutation $\pi$ and the twists, i.e. how we glue these cylinders $C_i$. Let $t_i\in\R/c_{\pi(i)}\Z$ be the horizontal displacement from a marked
zero on the bottom boundary of $C_i$ to the same zero on its top
boundary, as in \cite[\S4]{Winsor}. Use the positive horizontal
orientation on both boundary circles. With these markings, $t_i=0$
means that the corresponding boundary intervals align vertically.
A horizontal shear with parameter $s$ changes $t_i$ to
$t_i+sh_{\pi(i)}\pmod {c_{\pi(i)}}$, without changing any circumference, height,
or horizontal boundary length.

\begin{proposition}\label{prop:reconstruction}
For any algebraically primitive Veech surface with the data
above, we have $\pi=\operatorname{id}$ and a common horizontal shear makes
\[
t_1=\cdots=t_g=0.
\]
Consequently, any such surface belongs to the $\GL^+(2,\R)$-orbit of the regular $2p$-gon.
\end{proposition}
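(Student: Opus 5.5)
We first pin down the combinatorics of the horizontal decomposition, and then the permutation $\pi$ and the twists. In $\Omega\mathcal M_g(g-1,g-1)^{\mathrm{hyp}}$ with $g$ horizontal cylinders and an irreducible cusp, the cylinders form a chain $C_1-\cdots-C_g$. Each boundary between $C_i$ and $C_{i+1}$ consists of saddle connections. The hyperelliptic involution fixes every cylinder and rotates it by $\pi$. The end cylinders $C_1,C_g$ each carry a single saddle connection on their outer boundary, from one zero to itself. Hence the boundary lengths satisfy
\[
c_{\pi(i)}=b_{i-1}+b_i,
\]
where $b_i$ is the total length of the interface between $C_i$ and $C_{i+1}$, and $b_0,b_g$ are the outer boundary lengths. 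We will identify the $b_i$ with the $\ell_i$ up to the labeling, using that each interface is split into two saddle connections of equal length exchanged by the involution.

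\textbf{Step 1: $\pi=\mathrm{id}$.} The key input is that a horizontal stable differential degenerating this decomposition (pinching the core curves) must reproduce the cusp of Theorem~\ref{thm:cusp-rigidity}. Equivalently, the residue data $c_i$ must be attached to the nodes $x_i$ compatibly with the chain structure. Concretely, the circumferences of $C_1,\dots,C_g$ read along the chain must be obtained by applying the ordering of nodes on $\Pone_z$ (the cyclic order of the primitive $2p$-th roots modulo inversion) to the residues $c_i=R/(d_iP'(y_i))$.

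The first approach to try is direct. Solve $b_{i-1}+b_i=c_{\pi(i)}$ with all $b_i>0$. Then use that the $b_i$ span a $\Q$-module compatible with $K$: each interface length is a $\Q$-combination of circumferences, namely an alternating sum, and it must be positive. With $c_i=\ell_{i-1}+\ell_i$ and $\ell_i=2\cos(i\pi/p)$ strictly decreasing and positive for $i<p/2$, the alternating sums $b_k=\sum_{j\le k}(-1)^{k-j}c_{\pi(j)}$ minus the terms from $b_0$ must stay positive. We must also rule out any nontrivial $\pi$ for which they do. For this we use Galois conjugation: applying $\tau_u$ for $u\in(\Z/2p\Z)^\times$ permutes the $\ell_i$ up to sign. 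Positivity must hold for the real conjugate surfaces as well, in the sense of the degenerate sign pattern of Lemma~\ref{lem:top-duality}. Together with positivity, this should force $\pi$ to be the identity or the reversal, and the reversal is absorbed by relabeling the chain.

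\textbf{Step 2: twists.} Once $\pi=\mathrm{id}$, the moduli $h_i/c_i=\tfrac12\tan(\theta/2)$ are all equal. So the horizontal parabolic direction has a single modulus, and the parabolic element $\begin{pmatrix}1&c_1/h_1\\0&1\end{pmatrix}$ (up to the common modulus) lies in the Veech group. Each twist $t_i$ is determined modulo $c_i$ only up to a common shear. We then use the hyperelliptic involution: since the involution rotates each cylinder by $\pi$ and swaps the two zeros, the twist is forced to satisfy $2t_i\equiv t_{i-1}$-type alignment relations between consecutive boundary markings. Normalizing $t_1=0$ by a shear then propagates to $t_i\in\{0,c_i/2\}$.

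The remaining discrete choices are excluded by the vertical direction. That direction must also be periodic with the same irreducible cusp type by Theorem~\ref{thm:cusp-rigidity}, since the curve has a single cusp up to the Veech group action. Alternatively, they are excluded because the ratios $t_i/h_i$ must lie in $K$ with the same Galois behaviour as in Proposition~\ref{prop:MM}. Finally, with $\pi=\mathrm{id}$ and all $t_i=0$, the surface coincides cylinder-by-cylinder with the regular $2p$-gon decomposition, giving the $\GL^+(2,\R)$-orbit statement.

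\textbf{Main obstacle.} The hardest step is Step 1. The circumferences are only positive algebraic numbers, so the positivity of the interface lengths may not by itself exclude every permutation. We will try first to show the $b_i$ must equal the $\ell_i$ up to a Galois twist $\tau_u$, using that the $b_i$ are determined linearly by the $c_{\pi(j)}$. Galois conjugates of a valid configuration must then also have all $b_i$ nonzero with matched signs, and a counting argument over $u$ should leave only $u=\pm1$.
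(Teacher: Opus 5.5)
Your proposal has genuine gaps in both steps.

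\textbf{Step~1.} Neither route is carried out. The positivity route cannot work. The system $c_{\pi(i)}=b_{i-1}+b_i$ has a free parameter, and nontrivial permutations admit positive solutions. For $g=3$, $p=7$ and $\pi=(2\,3)$, take $b_1=1$. Then $b_0=c_1-1\approx2.80$, $b_2=c_3-1\approx0.69$ and $b_3=c_2-b_2\approx2.36$, all positive. Galois conjugates of interface lengths are not lengths on any translation surface, so there is no positivity to impose on them. Your ``key input'' is closer to something usable. Degenerating the horizontal cylinders preserves the gluing pattern of the boundary saddle connections, so the stable differential of Theorem~\ref{thm:cusp-rigidity} does record cylinder adjacency. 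However, you only assert this and never extract the chain order from it. Incidentally, the self-glued saddle connection of an end cylinder joins the two \emph{distinct} zeros: the involution reverses it and swaps the zeros.

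\textbf{Step~2.} This step fails outright. The hyperelliptic involution imposes no condition on the twists. For every choice of $t_i$, the glued surface still lies in $\Omega\mathcal M_g(g-1,g-1)^{\mathrm{hyp}}$, because the rotation of each cylinder may be taken about any point of its core curve, which absorbs the twist. So there are no relations forcing $t_i\in\{0,c_i/2\}$. The claim that the vertical direction is again the irreducible cusp ``since the curve has a single cusp'' is false. The $2p$-gon curve has two cusps, and the vertical direction arising here is the reducible one. You also never show that the vertical direction is periodic.

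\textbf{The missing idea.} Shear so that $t_1=0$. The self-glued interval of $C_1$ then sweeps out a vertical cylinder. By Veech dichotomy the vertical direction is parabolic, with rational modulus ratios $r_j$. It is a reducible cusp with $g+1$ cylinders $E_j$.
\begin{itemize}
\item The crossing numbers $n_{ij}$ give a rational, symmetric, nonnegative matrix $A_{ik}=\sum_j r_jn_{ij}n_{kj}$ with $Ac_\pi=\tfrac{c_{\pi(1)}}{2}c_\pi$. Thus $A$ is the matrix of multiplication by $c_{\pi(1)}/2$ in the $\Q$-basis $(c_{\pi(i)})$.
\item Then $\operatorname{tr}A=\Tr_{K/\Q}(c_{\pi(1)}/2)>0$, which forces $\pi(1)=1$.
\item Multiplication by $c_1/2$ is tridiagonal, and adjacent cylinders need $A_{i,i+1}>0$; together these force $\pi=\mathrm{id}$.
\item The zero entries $A_{ik}=0$ for $|i-k|\ge2$ show that no vertical cylinder meets nonadjacent horizontal cylinders. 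This pins $t_2,\dots,t_{g-1}$ to $0$.
\item Comparing $A_{g-1,g-1}=A_{g-1,g}=\tfrac12$ then gives $t_g=0$.
\end{itemize}
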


\begin{proof}
In the case of the regular $2p$-gon, we have $t^{\text{pol}}_i=\frac{c_i}{2}$ for each $i=1,\dots,g$ and a shear \[\begin{pmatrix}
1& - \cot(\theta/2)\\
0&1
\end{pmatrix}=
\begin{pmatrix}
1& - c_i/2 h_i\\
0&1
\end{pmatrix}
\] makes all twists vanish.

So we also apply the horizontal shear
\[
\begin{pmatrix}
1&-t_1/h_{\pi(1)}\\
0&1
\end{pmatrix}.
\]
This makes $t_1=0$ and preserves all the dimensions fixed above.
The self-glued interval of length $\ell$ at $C_1$ then suspends
to a vertical cylinder $E_1$, with circumference $h_{\pi(1)}$ and transverse
width $\ell$. Its endpoint trajectories are saddle connections
from each marked zero to itself, so the cylinder is maximal.
By Veech dichotomy \cite{Veech}, the vertical direction is completely periodic,
and the ratios of its cylinder moduli are rational. Since this direction contains a saddle connection that is a loop, every saddle connection in it is a loop \cite[Remark~2.6]{Winsor}. 
The degeneration in this direction thus has a reducible cusp with two rational components, one for each zero by \cite[Theorem~2.1 and Corollary~2.2]{Mol08}. The genus formula gives $g+1$ nodes between two component, corresponding to $g+1$ vertical cylinders, labeled as $E_j$,
$j=1,\ldots,g+1$. Let $v_j>0$ be the circumference of $E_j$ and $b_j>0$ its height. The ratio $v_j/b_j$ are all rational. 

Let $n_{ij}\in\Z_{\ge0}$ be the number of times a core curve of
$E_j$ crosses $C_i$, for $1\le i\le g$ and $1\le j\le g+1$.
Each crossing traverses the height $h_{\pi(i)}$. Conversely, a horizontal
core of $C_i$ meets $E_j$ in $n_{ij}$ intervals, each of length $b_j$.
These intervals partition the core away from finitely many boundary
points. Therefore
\begin{equation}\label{eq:intersection-rectangles}
v_j=\sum_{i=1}^g n_{ij}h_{\pi(i)},
\qquad
c_{\pi(i)}=\sum_{j=1}^{g+1} n_{ij}b_j.
\end{equation}
For the cylinder $E_1$, 
\[
n_{i1}=\delta_{i1},\qquad
v_1=h_{\pi(1)},\qquad b_1=\ell.
\]

Define the positive rational ratios
\[
r_j=\frac{b_j/v_j}{b_1/v_1}
    =\frac{b_jh_{\pi(1)}}{2v_j}\in\Q_{>0}.
\]
Then $r_1=1$ and
\[
b_j=\frac{2}{h_{\pi(1)}}r_jv_j.
\]
Define the matrix $A$ by
\[
    A_{ik}:=\sum_{j=1}^{g+1} r_jn_{ij}n_{kj}.
\]
It is a rational symmetric $g\times g$ matrix with nonnegative entries. It satisfies
\[
    \operatorname{tr}(A)=\sum_{j=1}^{g+1} = r_jn_{ij}^2>0.
\] 
By substituting into \eqref{eq:intersection-rectangles}, and using
$h_{\pi(k)}/h_{\pi(1)}=c_{\pi(k)}/c_{\pi(1)}$, we have
\[
 c_{\pi(i)}=\frac{2}{h_{\pi(1)}}\sum_{j=1}^{g+1}\sum_{k=1}^g
       r_jn_{ij}n_{kj}h_{\pi(k)}
     =\frac{2}{c_{\pi(1)}}\sum_{k=1}^g A_{ik}c_{\pi(k)},
\]
that is,
\[
Ac_{\pi}=\frac{c_{\pi(1)}}{2}c_{\pi},
\qquad
c_{\pi}=(c_{\pi(1)},\ldots,c_{\pi(g)})^{\mathsf T}.
\]

The circumferences $c_1,\ldots,c_g$ form a basis of $K$ over $\Q$, so the $i$th row of $A$ is the unique coordinate vector of $(c_{\pi(1)}/2)c_{\pi(i)}$ in this (permuted) basis. 
Since
\[
c_i=2\cos((i-1)\theta)-2\cos(i\theta),
\qquad 1\le i\le g,
\]
We have
\[
    \operatorname{Tr}_{K/\Q}\left(\frac{c_i}{2}\right)=\operatorname{Tr}_{K/\Q}(\cos((i-1)\theta))-\operatorname{Tr}_{K/\Q}(\cos(i\theta)).
\]

A simple computation gives \[\operatorname{Tr}_{K/\Q}(\cos(i\theta))=\frac{(-1)^{i+1}}{2},\qquad i=1,\dots, g\] and $\operatorname{Tr}_{K/\Q}(1)=g$. Thus we must have $\pi(1)=1$ in order to have $\operatorname{tr}(A)>0$. 

Now with $Ac_{\pi}=\frac{c_1}{2}c_{\pi}$, the product-to-sum identity gives

\[
 \frac{c_1}{2}c_1=\frac32c_1+\frac12c_2,
 \qquad
 \frac{c_1}{2}c_i=\frac12c_{i-1}+c_i+\frac12c_{i+1}
 \quad(2\le i<g),
 \qquad
 \frac{c_1}{2}c_g=\frac12c_{g-1}+c_g
\]
Since $C_1$ and $C_2$ are adjacent, we must have $A_{12}>0$, and this is only possible if $\pi(2)=2$. By repeating this, we can conclude that $\pi=\operatorname{id}$. Also we have $\ell=\ell_0$ and the boundary interval between $C_i$ and $C_{i+1}$ has length $\ell_{i}$. 

So we obtain
\begin{equation}\label{eq:jacobi-matrix}
A_{ik}=
\begin{cases}
3/2,&i=k=1,\\
1,&i=k\ge2,\\
1/2,&|i-k|=1,\\
0,&|i-k|\ge2.
\end{cases}
\end{equation}

For $|i-k|\ge2$, the zero entry in \eqref{eq:jacobi-matrix} gives
\[
0=A_{ik}=\sum_{j=1}^{g+1} r_jn_{ij}n_{kj}.
\]
Each summand is nonnegative and every $r_j$ is positive. Thus no
vertical cylinder $E_j$ meets two nonadjacent horizontal cylinders.

Fix $2\le i\le g-1$. A regular vertical trajectory entering $C_i$
from $C_{i-1}$ must exit toward $C_{i-1}$: otherwise its cylinder
would meet both $C_{i-1}$ and $C_{i+1}$. The incoming and outgoing
intervals toward $C_{i-1}$ both have length $\ell_{i-1}$. Except for
finitely many points on vertical saddle connections, translation
across $C_i$ maps the incoming interval into the outgoing one.
Their lengths are equal, so these intervals agree after translation,
up to endpoints. In the marked boundary coordinates they occupy the
same proper interval when $t_i=0$. A nonempty proper circle interval
is preserved by a translation only when that translation is zero.
Hence $t_2=\cdots=t_{g-1}=0$.

It remains to prove $t_g=0$. A regular periodic trajectory entering
$C_g$ from $C_{g-1}$ eventually returns to $C_{g-1}$. We must rule
out extra self-returns in $C_g$ between consecutive visits to
$C_{g-1}$.

For $1\le i\le g-2$, the cylinders $C_i$ and $C_{i+1}$ now both
have zero twist. Their matching boundary intervals of length $\ell_i$
form a vertical cylinder crossing each of them once and no other
horizontal cylinder. Relabel these cylinders as $E_{i+1}$.
Their transverse widths are $\ell_i$, their circumferences are
$h_i+h_{i+1}$, and their intersection vectors are $e_i+e_{i+1}$.
These cylinders have the same dimensions as the corresponding polygon
cylinders, so $r_{i+1}=1/2$.

The cylinders $E_2,\ldots,E_{g-1}$ fill those boundary intervals
completely. In particular, $E_{g-1}$ fills the part of $C_{g-1}$
adjacent to $C_{g-2}$. Among the remaining cylinders, set
\[
J=\{j\in\{g,g+1\}:n_{g-1,j}>0\}.
\]
Each $E_j$ with $j\in J$ also meets $C_g$, since $C_{g-1}$ does not have self-meeting boundary. Also, successive visits of a
periodic core to $C_{g-1}$ are separated by at least one visit to
$C_g$, so $n_{g,j}\ge n_{g-1,j}>0$. The cylinder $E_{g-1}$ contributes $1/2$ to $A_{g-1,g-1}$ and
nothing to $A_{g-1,g}$. Thus \eqref{eq:jacobi-matrix} gives
\[
A_{g-1,g-1}=\sum_{j\in J}r_jn_{g-1,j}^2=\frac12,
\qquad
A_{g-1,g}=\sum_{j\in J}r_jn_{g-1,j}n_{g,j}=\frac12.
\]
Subtracting two equations, we obtain
\[
\sum_{j\in J}r_jn_{g-1,j}\bigl(n_{g,j}-n_{g-1,j}\bigr)=0.
\]
Every summand is nonnegative and $r_j, n_{g-1,j}>0$ for $j\in J$, so $n_{g,j}=n_{g-1,j}$ for all $j\in J$.
There is therefore exactly one visit to $C_g$ between consecutive
visits to $C_{g-1}$. Apart from the finitely many singular trajectories,
the incoming interval of length $\ell_{g-1}$ in $C_g$ maps directly
to the outgoing interval toward $C_{g-1}$. The same argument as above gives $t_g=0$.

The marked cylinders now have the same twists as the sheared regular $2p$-gon. They therefore define equivalent translation surfaces. 
\end{proof}

\begin{proof}[Proof of Theorem~\ref{thm:classification}]
Choose an irreducible periodic direction as in Proposition~\ref{prop:MM}. By Theorem~\ref{thm:cusp-rigidity}, we have $p=2g+1$ prime and the irreducible cusp differential is equal to that of the regular $2p$-gon. Then Proposition~\ref{prop:reconstruction} proves that the smooth generating differential is in a $\GL^+(2,\R)$-orbit of regular $2p$-gon. \end{proof}

\end{document}